\theoremstyle{plain}
\newtheorem{theorem}{Theorem}[section]
\newtheorem{lemma}[theorem]{Lemma}
\newtheorem{prepos}[theorem]{Proposition}
\theoremstyle{definition}
\newtheorem{definition}[theorem]{Definition}
\newtheorem{remark}[theorem]{Remark}
\newtheorem{example}[theorem]{Example}
\renewcommand{\Im}{\operatorname{Im}}
\renewcommand{\Re}{\operatorname{Re}}
\begin{document}

\author[M.~Tyaglov]{Mikhail Tyaglov}

\address{School of Mathematical Sciences, Shanghai Jiao Tong University\\
and  Faculty of Mathematics, Far East Federal University}
\email{tyaglov@sjtu.edu.cn}

\title[Self-interlacing polynomials]{Self-interlacing polynomials}

\keywords {Root location of polynomials; self-interlacing polynomials; stable polynomials; Hankel determinants; Hurwitz matrix}

\subjclass[2010]{12D10; 26C10; 26C15; 30C15; 15B05}










\begin{abstract}
We describe a new subclass of the class of real polynomials with real simple roots called self-interlacing polynomials.
This subclass is isomorphic to the class of real Hurwitz stable polynomials (all roots in the open
left half-plane). In the work, we present basic properties of self-interlacing polynomials and their relations with Hurwitz and Hankel
matrices as well as with Stiltjes type of continued fractions. We also establish ``self-interlacing'' analogues
of the well-known Hurwitz and Li\'enard-Chipart criterions for stable polynomials. A criterion
of Hurwitz stability of polynomials in terms of minors of certain Hankel matrices is established.
\end{abstract}

\maketitle

%

%


\setcounter{equation}{0}

\section{Introduction}

A real polynomial $p(z)$ is called \emph{self-interlacing} if all its roots are real and simple
(of multiplicity one) and interlace the roots of the polynomial $p(-z)$.

It is easy to see that some numbers $\lambda_j$, $j=1,\ldots,n$, being ordered as decreasing their absolute values,
are the roots of the polynomial $p(z)$ if and only if they satisfy one of the following
inequalities
\begin{equation}\label{roots.type.I}
\lambda_1>-\lambda_2>\lambda_3>\cdots>(-1)^{n-1}\lambda_n>0,
\end{equation}
or
\begin{equation}\label{roots.type.II}
-\lambda_1>\lambda_2>-\lambda_3>\cdots>(-1)^n\lambda_n>0.
\end{equation}

If the roots of the polynomial $p(z)$ are distributed as in~\eqref{roots.type.I}, then $p(z)$
is called self-interlacing of kind~$I$. Respectively, if the roots of $p(z)$ are
distributed as in~\eqref{roots.type.II}, then $p(z)$
is called self-interlacing polynomial of kind $II$. We denote the class of all self-interlacing polynomials
as~$\mathbf{SI}$. Respectively, the classes of all self-interlacing polynomials of kind $I$ and $II$ are denoted
as $\mathbf{SI}_{I}$ and~$\mathbf{SI}_{II}$.

\vspace{2mm}

The self-interlacing polynomials seem to implicitly appear first time in~\cite[Lemma~2.6]{Fisk} (in its first edition) where a necessary condition for
real polynomials to be self-interlacing was established (see Theorem~\ref{Th.Stodola.necessary.condition.self-interlacing} of the present work). In~\cite{Holtz_SI} (see
also~\cite{Tyaglov_Schwarz} and references there)
there were introduced tridiagonal matrices of the form:
\begin{equation}\label{main.matrix.intro}
\begin{pmatrix}
    b_1 & b_2 &  0 &\dots&   0   & 0 \\
    b_2 & 0 &b_3 &\dots&   0   & 0 \\
     0  &b_3 & 0 &\dots&   0   & 0 \\
    \vdots&\vdots&\vdots&\ddots&\vdots&\vdots\\
     0  &  0  &  0  &\dots&0& b_n\\
     0  &  0  &  0  &\dots&b_{n}&0\\
\end{pmatrix},\qquad b_1\neq0,\quad b_k>0,\quad k=2,\ldots,n,
\end{equation}
whose characteristic polynomials belong to $\mathbf{SI}_{I}$ if $b_1>0$ (it was established implicitly without mentioning of self-interlacing polynomials).
In~\cite{Holtz_SI}, the author also proved that for any polynomial $p\in\mathbf{SI}_{I}$, there exists a unique matrix of the form~\eqref{main.matrix.intro} with $b_1>0$ whose characteristic polynomial is~$p$. Clearly, for $b_1<0$, we deal with the polynomials in the class $\mathbf{SI}_{II}$.

It is easy to see that $p(z)\in\mathbf{SI}_{I}$ if
and only if $p(-z)\in\mathbf{SI}_{II}$, so it is sufficient to study only one of the classes, say, $\mathbf{SI}_{I}$.

Note that self-interlacing polynomials appear in the theory of orthogonal polynomials. For example,
the Chebyshev polynomials of III and IV kinds are self-interlacing, see e.g.~\cite{MasonHandscomb}.
In fact, there are many curious and useful self-interlacing
 orthogonal polynomials, and we plan to devote a separate research to this interesting topic.
Self-interlacing polynomials also appear as characteristic polynomials of certain structured matrices,
see~\cite{Holtz_SI,Tyaglov_Schwarz,Tyaglov_SI_2}.

One of the main results of the present work is the following theorem.
\begin{theorem}\label{Theorem.isomorphism}
The class $\mathbf{SI}_{I}$ is isomorphic to the class of real Hurwitz stable polynomials.
\end{theorem}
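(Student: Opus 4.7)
The plan is to establish the isomorphism via a common parametrization of both classes by $\mathbb{R}_{>0}^n$, then extract an explicit formula for the bijection at the level of coefficients.

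First I would invoke the cited result from \cite{Holtz_SI}: every monic $p \in \mathbf{SI}_I$ of degree $n$ is the characteristic polynomial of a unique tridiagonal matrix of the form \eqref{main.matrix.intro} with positive entries $(b_1, \ldots, b_n) \in \mathbb{R}_{>0}^n$, and conversely every such matrix produces an element of $\mathbf{SI}_I$. This yields a bijection $\mathbf{SI}_I \leftrightarrow \mathbb{R}_{>0}^n$.

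Next I would invoke the classical Stieltjes continued-fraction characterization of Hurwitz stability (cf.\ Gantmacher, \emph{Matrix Theory}, or Wall, \emph{Analytic Theory of Continued Fractions}): a real monic polynomial $h$ of degree $n$ is Hurwitz if and only if the Stieltjes-type continued-fraction expansion of the rational function built from the even and odd parts of $h$ has $n$ strictly positive coefficients $(c_1, \ldots, c_n) \in \mathbb{R}_{>0}^n$, giving a bijection $\{\text{real Hurwitz polynomials}\} \leftrightarrow \mathbb{R}_{>0}^n$. Composing with Step~1 produces the desired bijection $\mathbf{SI}_I \leftrightarrow \{\text{real Hurwitz polynomials}\}$. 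To get a clean closed-form description, I would verify that after matching the parameters, this bijection is given by $h(z) = h_e(z^2) + z h_o(z^2) \mapsto p(z) = h_e(-z^2) - z h_o(-z^2)$ (an involution on polynomials of degree $n$, possibly up to a parity-dependent normalization $z \mapsto -z$).

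The main obstacle is to show that the two parameter identifications actually agree, i.e.\ that the Stieltjes CF coefficients $c_k$ on the Hurwitz side correspond by a simple explicit transformation (e.g.\ $c_k = b_k^2$) to the matrix entries $b_k$ of the $\mathbf{SI}_I$ side. Equivalently, one must show directly that the coefficient-level map above carries Hurwitz polynomials into $\mathbf{SI}_I$; this amounts to a Hermite--Biehler-type argument proving that the interlacing of the negative roots of $h_e$ and $h_o$ (guaranteed by Hurwitz stability) translates, under the substitution $w \mapsto -w$, into the self-interlacing root pattern \eqref{roots.type.I} for $p$. Some additional bookkeeping is required to handle the parity of $n$, which governs whether the image lies in $\mathbf{SI}_I$ or $\mathbf{SI}_{II}$, and to normalize leading coefficients consistently.
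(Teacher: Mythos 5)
Your first two steps only deliver a set-theoretic bijection: you parametrize monic polynomials in $\mathbf{SI}_I$ by $(b_1,\ldots,b_n)\in\mathbb{R}_{>0}^n$ via the matrices \eqref{main.matrix.intro} and monic Hurwitz polynomials by $n$ positive Stieltjes coefficients, and then compose. But any two sets in bijection with $\mathbb{R}_{>0}^n$ are in bijection, so this composite carries no information unless you prove that the two parametrizations match up under a specific correspondence. The theorem's actual content — made explicit in the paper right after the statement and proved as Theorem~\ref{Theorem.connection.Hurwitz.self-interlacing} — is that the isomorphism is the \emph{linear} map on coefficients $a_k\mapsto(-1)^{k(k+1)/2}a_k$ (with degree reversal), i.e.\ $p_0(u)\mapsto \pm q_0(-u)$, $p_1(u)\mapsto \pm q_1(-u)$ on even/odd parts. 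You correctly identify this as ``the main obstacle'' and even guess the right shape of the map up to parity normalization, but you do not prove it; the claim $c_k=b_k^2$ relating Stieltjes coefficients to the entries of \eqref{main.matrix.intro} is left entirely unverified, and it is not a routine computation.

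The unproved step is precisely the Hermite--Biehler-type statement for self-interlacing polynomials: that $p=p_0(z^2)+zp_1(z^2)\in\mathbf{SI}_I$ if, and only if, $\Phi=p_1/p_0$ maps the upper half-plane to itself and has only positive (resp.\ nonnegative) poles. This is the paper's Theorem~\ref{Theorem.main.self-interlacing}, proved there by passing through $R(z)=(-1)^np(-z)/p(z)$ and the Cayley-type transform \eqref{poly1.1}, and it is the real work behind Theorem~\ref{Theorem.isomorphism}; the paper then simply observes that $\Psi(u)=\Phi(-u)$ is the associated function of $q$, reverses the half-plane and the sign of the poles, and invokes the classical Hankel-minor criteria together with \eqref{Hurwitz.formula.1}--\eqref{Hurwitz.formula.2} and Hurwitz's criterion. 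Your detour through the tridiagonal matrices of \cite{Holtz_SI} and Stieltjes fractions adds the extra burden of the parameter matching without removing the need for this interlacing-transfer argument, so as written the proposal reduces the theorem to its hardest ingredient rather than proving it.
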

Recall that a real polynomial is called Hurwitz stable (or stable) if its zeroes lie in the \textit{open}
left half-plane of the complex plane. The mentioned isomorphism is a linear operator acting on the coefficients of polynomials. Precisely,
a polynomial
\begin{equation}\label{main.poly}
p(z)=a_0z^n+a_1z^{n-1}+a_2z^{n-22}+a_3z^{n-3}+a_4z^{n-4}+\cdots+a_n
\end{equation}
belongs to $\mathbf{SI}_I$ if and only if the polynomial
\begin{equation}\label{main.poly.q}
q(z)=a_0z^n-a_1z^{n-1}-a_2z^{n-2}+a_3z^{n-3}+a_4z^{n-4}-\cdots+(-1)^{\tfrac{n(n+1)}{2}}a_n
\end{equation}
is Hurwitz stable (see Theorem~\ref{Theorem.connection.Hurwitz.self-interlacing}).

%
%

Due to the isomorphism 
self-interlacing polynomials have some properties similar to ones of Hurwitz stable polynomials.
In this work, we prove the following analogue of Hurwitz criterion.

\begin{theorem}[analogue of Hurwitz's criterion]\label{Theorem.self-interlacing.Hurwitz.criterion}
A real polynomial $p$ belongs to the class~$\mathbf{SI}_I$ if and only if
its Hurwitz minors $\Delta_k(p)$ satisfy the inequalities
\begin{equation*}
(-1)^j\Delta_{2j-1}(p)>0,\qquad j=1,\ldots,\left[\dfrac {n+1}2\right],
\end{equation*}
\begin{equation*}
\Delta_{2j}(p)>0,\qquad j=1,\ldots,\left[\dfrac {n}2\right],
\end{equation*}
where $[\rho]$ denotes the largest integer not exceeding $\rho$.
\end{theorem}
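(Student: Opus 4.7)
The natural approach is to pull the claim back across the isomorphism of Theorem~\ref{Theorem.isomorphism}: by the explicit form \eqref{main.poly}--\eqref{main.poly.q}, $p\in\mathbf{SI}_I$ is equivalent to the Hurwitz stability of the companion polynomial $q$. Applying the classical Hurwitz criterion to $q$ (which asserts that $q$ is stable iff all leading principal Hurwitz minors $\Delta_k(q)$ are positive), the theorem will follow as soon as one expresses each $\Delta_k(q)$ in terms of $\Delta_k(p)$.

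All the real work lies in this sign translation in the Hurwitz matrix. Writing the coefficients of $q$ as $b_m=\varepsilon_m a_m$ with $\varepsilon_m=(-1)^{m(m+1)/2}$, the $(i,j)$-entry of $H(q)$ is $\varepsilon_{2j-i}a_{2j-i}$. The key observation, which I would establish first, is that the sign pattern $\varepsilon_{2j-i}$ factorises as a product of a row scalar and a column scalar,
\[
\varepsilon_{2j-i}=r_i\,c_j,\qquad c_j=(-1)^j,\quad r_i=(-1)^{i(i-1)/2},
\]
checked by splitting on the parity of $i$ and using the identities $\varepsilon_{2\ell}=(-1)^\ell$, $\varepsilon_{2\ell+1}=(-1)^{\ell+1}$. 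This multiplicative structure is exactly what is needed: pulling $r_i$ out of row $i$ and $c_j$ out of column $j$ of the $k\times k$ leading submatrix immediately gives
\[
\Delta_k(q)=\Bigl(\prod_{i=1}^{k}r_i\Bigr)\Bigl(\prod_{j=1}^{k}c_j\Bigr)\Delta_k(p)=(-1)^{k(k+1)(k+2)/6}\,\Delta_k(p).
\]

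A short parity analysis of the integer $k(k+1)(k+2)/6$ (the product of three consecutive integers divided by~$6$) shows that its parity is $0$ when $k$ is even and equals the parity of $j$ when $k=2j-1$. Hence
\[
\Delta_{2j}(q)=\Delta_{2j}(p),\qquad \Delta_{2j-1}(q)=(-1)^j\Delta_{2j-1}(p),
\]
and the positivity of all $\Delta_k(q)$ for $1\le k\le n$ translates term by term into the inequalities stated in the theorem, with the index ranges $1\le j\le\left[\tfrac{n+1}{2}\right]$ and $1\le j\le\left[\tfrac{n}{2}\right]$ forced by $k\le n$. The only non-routine obstacle is the factorisation $\varepsilon_{2j-i}=r_ic_j$; without it, the Hurwitz minors of $q$ would not clean up into a single scalar multiple of the corresponding minors of $p$, and the match with the classical Hurwitz criterion would be lost.
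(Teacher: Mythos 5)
Your argument is correct, and its one nontrivial computation checks out: with $\varepsilon_m=(-1)^{m(m+1)/2}$ one indeed has $\varepsilon_{2\ell}=(-1)^{\ell}$ and $\varepsilon_{2\ell+1}=(-1)^{\ell+1}$, hence $\varepsilon_{2j-i}=(-1)^{i(i-1)/2}(-1)^{j}$ on every nonzero entry $a_{2j-i}$ of the Hurwitz matrix, and the exponent $\binom{k+1}{3}+\binom{k+1}{2}=\binom{k+2}{3}=k(k+1)(k+2)/6$ is odd exactly when $k\equiv1\pmod 4$, which yields $\Delta_{2j}(q)=\Delta_{2j}(p)$ and $\Delta_{2j-1}(q)=(-1)^{j}\Delta_{2j-1}(p)$. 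There is also no circularity: the paper's proof of Theorem~\ref{Theorem.connection.Hurwitz.self-interlacing} does not invoke the statement you are proving. That said, your route is genuinely different from both proofs in the paper. The paper's first proof works directly with the function $\Phi(u)$ of~\eqref{assoc.function}: Theorem~\ref{Theorem.main.self-interlacing} characterises $\mathbf{SI}_I$ by $\Phi$ mapping the upper half-plane to itself with positive (resp.\ nonnegative) poles, Lemmata~\ref{Lemma.R-functions.Hankel.1}--\ref{Lemma.R-functions.Hankel.3} convert this into sign conditions on the Hankel minors $D_j(\Phi)$ and $\widehat D_j(\Phi)$, and the formul\ae~\eqref{Hurwitz.formula.1}--\eqref{Hurwitz.formula.2} translate those into the stated conditions on $\Delta_{2j-1}(p)$ and $\Delta_{2j}(p)$; a second proof in Section~\ref{section:Second.proof.of.SI.criterion} goes through $R(z)$ and Lemma~\ref{lem.Determinants.relations.1}. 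Your reduction to the classical Hurwitz criterion via the dual polynomial $q$ is the cleanest deduction of the three, but it front-loads all the analytic work into Theorem~\ref{Theorem.connection.Hurwitz.self-interlacing}, which the paper proves with exactly the Hankel-minor machinery you bypass, so nothing is saved globally. Note finally that your factorisation $\varepsilon_{2j-i}=r_ic_j$ is precisely the diagonal factorisation $\mathcal{H}_n(q)=C_n\mathcal{H}_n(p)E_n$ of~\eqref{SI_Stab.connection.7} in Section~\ref{section:properties}, whose Cauchy--Binet consequence~\eqref{SI_Stab.connection.6} contains your identity $\Delta_k(q)=(-1)^{k(k+1)(k+2)/6}\Delta_k(p)$ as the special case of leading principal minors; the paper deploys it there for total nonnegativity of the Hurwitz matrix rather than for this theorem.
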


Recall that for a real polynomial $p$ defined in~\eqref{main.poly}, its \textit{Hurwitz minors} $\Delta_k(p)$ have the form
\begin{equation}\label{Hurwitz.minors}
\Delta_{k}(p)=
\begin{vmatrix}
a_1&a_3&a_5&a_7&\dots&a_{2k-1}\\
a_0&a_2&a_4&a_6&\dots&a_{2k-2}\\
0  &a_1&a_3&a_5&\dots&a_{2k-3}\\
0  &a_0&a_2&a_4&\dots&a_{2k-4}\\
\vdots&\vdots&\vdots&\vdots&\ddots&\vdots\\
0  &0  &0  &0  &\dots&a_{k}
\end{vmatrix},\quad k=1,\ldots,n,
\end{equation}
where we set $a_i\equiv0$ for $i>n$.

Another criterion for stability of real polynomials whose analogue also can be established
for self-interlacing polynomials is the Li\'enard--Chipart criterion~\cite{LienardChipart} (see also~\cite{Gantmakher,Holtz_Tyaglov}).
\begin{theorem}[analogue of Li\'enard--Chipart's criterion]\label{Theorem.Lienard.Chipart.intro}
A real polynomial $p$ belongs to the class~$\mathbf{SI}_I$ if and only if
the following inequalities hold
\begin{equation*}
(-1)^j\Delta_{2j-1}(p)>0,\qquad j=1,\ldots,\left[\dfrac {n+1}2\right],
\end{equation*}
\begin{equation*}
(-1)^ja_{2j}>0,\qquad j=1,\ldots,\left[\dfrac {n}2\right].
\end{equation*}
\end{theorem}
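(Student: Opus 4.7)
The plan is to reduce the theorem to the classical Li\'enard--Chipart criterion by means of the isomorphism of Theorem~\ref{Theorem.isomorphism}, in exact parallel with how the analogue of Hurwitz's criterion (Theorem~\ref{Theorem.self-interlacing.Hurwitz.criterion}) is to be obtained. Namely, by Theorem~\ref{Theorem.isomorphism} a polynomial $p$ of the form~\eqref{main.poly} lies in $\mathbf{SI}_I$ if and only if the associated polynomial $q$ of the form~\eqref{main.poly.q} is Hurwitz stable, so it suffices to translate the classical Li\'enard--Chipart conditions for $q$ back into conditions on~$p$.

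To carry this out I would set up two dictionaries. Writing $q(z)=\sum_{k=0}^n b_k z^{n-k}$, one has $b_k=\varepsilon_k a_k$, where the signs $\varepsilon_k$ follow the period-four pattern $(+,-,-,+)$ starting at $k=0$. In particular $\varepsilon_{2j}=(-1)^j$, so the positivity of the even-indexed coefficients of $q$ that Li\'enard--Chipart demands translates verbatim into the inequalities $(-1)^j a_{2j}>0$ for $j=1,\ldots,[n/2]$ (the case $j=0$ being the harmless normalisation $a_0>0$), which is precisely the second list in the statement. Next, by flipping the appropriate rows and columns of the Hurwitz matrix $H_k(q)$ and carefully tracking the sign factors, I would derive the identities
\begin{equation*}
\Delta_{2j}(q)=\Delta_{2j}(p),\qquad \Delta_{2j-1}(q)=(-1)^j\Delta_{2j-1}(p).
\end{equation*}
These are purely combinatorial determinant manipulations, and since they are the very identities needed to deduce Theorem~\ref{Theorem.self-interlacing.Hurwitz.criterion} from the classical Hurwitz criterion, they may be taken as already established. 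Under the second identity, positivity of the odd-indexed Hurwitz minors of $q$ becomes exactly the first list in the statement.

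With both dictionaries in hand I would finish by invoking the classical Li\'enard--Chipart criterion applied to $q$: namely, the real polynomial $q$ is Hurwitz stable if and only if $b_{2j}>0$ for $j=0,1,\ldots,[n/2]$ together with $\Delta_{2j-1}(q)>0$ for $j=1,\ldots,[(n+1)/2]$. Substituting the two dictionaries yields exactly the two lists of inequalities in the theorem, and combined with Theorem~\ref{Theorem.isomorphism} this closes the equivalence. The main obstacle will be the sign-bookkeeping for the minor identity $\Delta_{2j-1}(q)=(-1)^j\Delta_{2j-1}(p)$: the row- and column-flip count depends on the parities of $j$ and of $\lfloor i/2\rfloor$, so the tally has to be done carefully in order to pin down the correct $(-1)^j$ factor on the odd-indexed minors (and to confirm the absence of any such factor on the even-indexed ones). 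Every other step is a direct consequence of Theorem~\ref{Theorem.isomorphism} and the classical criterion.
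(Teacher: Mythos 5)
Your proposal takes a genuinely different route from the paper. The paper never passes through the dual stable polynomial $q$: its proof of the Li\'enard--Chipart analogue stays entirely inside Section~\ref{section:Basic.theorems}. By Theorem~\ref{Theorem.main.self-interlacing}, $p\in\mathbf{SI}_I$ iff the associated function $\Phi(u)$ of \eqref{assoc.function} maps the upper half-plane to itself and has only positive (resp.\ nonnegative) poles; Lemma~\ref{Lemma.R-functions.Hankel.1} together with \eqref{Hurwitz.formula.1} converts the mapping property (with the right pole count) into $(-1)^j\Delta_{2j-1}(p)>0$; and since the poles of $\Phi$ are then the real roots of its denominator $a_0u^{l}+a_2u^{l-1}+\cdots$, Descartes' rule of signs converts their positivity into $(-1)^ja_{2j}>0$. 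Your alternative --- transporting the classical criterion through the duality of Theorem~\ref{Theorem.isomorphism} --- is legitimate: the isomorphism is proved in Section~\ref{section:connection.Hurwitz.SI} without using the Li\'enard--Chipart analogue, so there is no circularity; your coefficient dictionary $\varepsilon_{2j}=(-1)^j$ is correct; and the minor identities $\Delta_{2j}(q)=\Delta_{2j}(p)$, $\Delta_{2j-1}(q)=(-1)^j\Delta_{2j-1}(p)$ do hold. Be aware, though, that they are not established the way you assert (the paper proves Theorem~\ref{Theorem.self-interlacing.Hurwitz.criterion} directly via Hankel minors of $\Phi$, not by reduction to the classical Hurwitz criterion); the identities actually follow from formula \eqref{SI_Stab.connection.6} of Section~\ref{section:properties} specialized to leading principal minors. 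What your route buys is structural transparency; what the paper's route buys is a self-contained proof that explains directly why the even-indexed coefficients of $p$ appear (they are the coefficients of the denominator of $\Phi$).

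There is one loose end you should close: the form of the classical Li\'enard--Chipart criterion you invoke ($b_{2j}>0$ for $j=0,\ldots,[n/2]$ plus the odd Hurwitz minors) coincides with a textbook variant only for even $n$. For odd $n$ every standard variant also requires $b_n>0$, and under your dictionary that becomes a sign condition on the odd-indexed coefficient $a_n$ of $p$, which does not appear in the theorem. So in the sufficiency direction you must show $b_n>0$ is implied by the conditions you do impose. It is: the odd-minor conditions force the zeros of the numerator $b_1u^{r}+b_3u^{r-1}+\cdots+b_n$ of the dual associated function to interlace its poles (namely $0$ and the roots of $b_0u^{r}+\cdots+b_{n-1}$), and $b_{2j}>0$ pushes all those poles onto the nonpositive axis, whence every zero of the numerator is negative and $b_n>0$ follows. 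But this is precisely the Descartes/R-function argument the paper runs on $p$ directly, so it must be supplied rather than absorbed into a citation.
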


\vspace{2mm}

As well, we provide one more criterion for polynomials to be self-interlacing (Theorem~\ref{Theorem.SI.first.new}) which together with Theorem~\ref{Theorem.isomorphism}
and formul\ae~\eqref{main.poly}--\eqref{main.poly.q} provides a seemingly new criterion of stability of real polynomials.
Namely, a polynomial $p(z)$ of degree $n$ is Hurwitz stable if and only if the following inequalities hold (Theorem~\ref{Theorem.new.stability.criterion})
$$
(-1)^{\tfrac{j(j+1)}2}D_j(R)>0, \qquad j=1,\ldots,n,
$$
where $D_j(R)$ are the Hankel minors
$$
D_j(R)=\begin{vmatrix}
    s_0 &s_1 &s_2 &\dots &s_{j-1}\\
    s_1 &s_2 &s_3 &\dots &s_j\\
    \vdots&\vdots&\vdots&\ddots&\vdots\\
    s_{j-1} &s_j &s_{j+1} &\dots &s_{2j-2}
\end{vmatrix},\quad j=1,2,3,\dots,
$$
constructed with the coefficient of the Laurent series of the function
$$
R(z):=\dfrac{(-1)^np(-z)}{p(z)}=1+\frac{s_0}z+\frac{s_1}{z^2}+\frac{s_2}{z^3}+\dots,
$$
It is interesting to note that the function $R(z)$ related to a Hurwitz stable polynomial maps
the open right half-plane of the complex plane to the open unit disc (Theorem~\ref{Theorem.new.stability.criterion}),
while $R(z)$ related to a self-interlacing polynomial of kind $I$ maps the upper half-plane
to the lower half-plane (Theorem~\ref{Theorem.SI.first.new}).

Finally, we would like to inform the reader that instead of citing various papers
regarding rational functions, matrices and especially rational $R$-functions, we cite
the survey~\cite{Holtz_Tyaglov} that was written exactly for such quotations. All other
references can be found there. Some results of this work were mentioned in the technical
report~\cite{Tyaglov_GHP}, but here we substantially simplified most of the proofs
and provided additional properties of self-interlacing polynomials.

The parer is organized as follows. In Section~\ref{section:Basic.theorems}
we introduce the basic objects and prove basic Theorems~\ref{Theorem.SI.first.new}--\ref{Theorem.main.self-interlacing} which
imply immediately Theorems~\ref{Theorem.self-interlacing.Hurwitz.criterion}--\ref{Theorem.Lienard.Chipart.intro} due to properties
of Hurwitz minors and rational $R$-functions (see e.g.~\cite{Holtz_Tyaglov}).
Section~\ref{section:connection.Hurwitz.SI}
is devoted to Theorem~\ref{Theorem.isomorphism} and its consequences whose proofs are also based on
Theorems~\ref{Theorem.SI.first.new}--\ref{Theorem.main.self-interlacing} and properties of $R$-functions. In Section~\ref{section:properties}
we study some properties of self-interlacing polynomials. In particular, we study
the minors of Hurwitz matrix of self-interlacing polynomials. In Section~\ref{section:Second.proof.of.SI.criterion},
we establish some curious determinant formul\ae\ and prove the stability criterion mentioned above.


\setcounter{equation}{0}

\section{Hurwitz and Li\'enard--Chipart criterions}\label{section:Basic.theorems}

Consider a real polynomial
\begin{equation}\label{main.polynomial}
p(z)=a_0z^n+a_1z^{n-1}+\dots+a_n,\qquad a_1,\dots,a_n\in\mathbb
R,\ a_0>0.
\end{equation}
In the rest of the paper, we use the following notation
\begin{equation}\label{floor.poly.degree}
l\stackrel{def}{=}\left[\dfrac {n+1}2\right],
\end{equation}
where $n=\deg p$, and $[\rho]$ denotes the largest integer not
exceeding $\rho$.

We introduce the following two auxiliary rational functions associated with the polynomial $p(z)$:

\begin{equation}\label{assoc.function}
z\Phi(z^2)=\displaystyle\frac {p(z)-(-1)^np(-z)}{p(z)+(-1)^np(-z)}=\dfrac{a_{1}z^{n-1}+a_{3}z^{n-3}+a_{5}z^{n-5}+\cdots}{a_0z^n+a_{2}z^{n-2}+a_{4}z^{n-4}+\cdots}\ ,
\end{equation}
and
\begin{equation}\label{auxiliary.rational.function}
R(z)=\dfrac{(-1)^np(-z)}{p(z)}\ .
\end{equation}
Note that for odd $n$ the function $\Phi(u)$ has a pole at zero.

It is easy to see that these functions are related as follows
\begin{equation}\label{poly1.1}
z\Phi(z^2)=\displaystyle\frac{1-R(z)}{1+R(z)},
\end{equation}
and
\begin{equation}\label{poly1.3}
R(z)=\displaystyle\frac{1-z\Phi(z^2)}{1+z\Phi(z^2)}\,.
\end{equation}

It turns out that for the polynomial $p(z)$ being self-interlacing, the functions $\Phi(u)$ and $R(z)$ possess some remarkable properties.
Namely, the following two theorems hold.
\begin{theorem}\label{Theorem.SI.first.new}
Let $p(z)$ be a real polynomial. The polynomial $p(z)\in\mathbf{SI}_{I}$ if and only if the function $R(z)$ defined in~\eqref{auxiliary.rational.function}
maps the upper half-plane of the complex plane to the lower half-plane and has exactly $n$ poles.
\end{theorem}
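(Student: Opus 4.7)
The plan is to identify the conditions ``$R$ maps the upper half-plane to the lower half-plane'' and ``$R$ has exactly $n$ poles'' with the simple partial-fraction form
\[
R(z) = 1 + \sum_{j=1}^n \frac{c_j}{z-\mu_j}, \qquad \mu_j\in\mathbb{R}\ \text{distinct},\quad c_j>0,
\]
which is the standard description of a rational function that sends the upper half-plane to the closed lower half-plane (it is exactly $-1$ times a rational $R$-function with the normalisation $R(\infty)=1$), as surveyed in \cite{Holtz_Tyaglov}. Once this equivalence is in hand, both implications reduce to matching the positivity of the residues $c_j$ with the interlacing inequalities defining $\mathbf{SI}_I$.

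For the forward direction I would assume $p\in\mathbf{SI}_I$, so that the roots $\lambda_1,\ldots,\lambda_n$ ordered by decreasing absolute value satisfy $\operatorname{sign}(\lambda_j)=(-1)^{j-1}$ and $|\lambda_1|>\cdots>|\lambda_n|>0$. First, $R$ admits no cancellation between numerator and denominator: such a cancellation would require $\lambda_k=-\lambda_{k'}$ for some $k\ne k'$, which the strict magnitude separation rules out, so $R$ has exactly $n$ simple real poles. From the factorisation $R(z)=\prod_{k}(z+\lambda_k)/(z-\lambda_k)$ the residue at $\lambda_j$ computes to
\[
c_j=\frac{2\lambda_j\prod_{k\ne j}(\lambda_j+\lambda_k)}{\prod_{k\ne j}(\lambda_j-\lambda_k)}.
\]
The combinatorial core is to show $c_j>0$: a short case analysis on the relative signs of $\lambda_j$ and $\lambda_k$ establishes that the ratio $(\lambda_j+\lambda_k)/(\lambda_j-\lambda_k)$ is negative whenever $k<j$ and positive whenever $k>j$. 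Combining this with $\operatorname{sign}(\lambda_j)=(-1)^{j-1}$ gives $\operatorname{sign}(c_j)=(-1)^{j-1}\cdot(-1)^{j-1}=+1$, and the $R$-property follows from the cited characterisation.

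For the converse, assume $R$ has exactly $n$ poles and maps the upper half-plane to the lower half-plane. The $R$-class characterisation forces these poles to be real and simple, so $p$ has $n$ real simple roots $\mu_1>\cdots>\mu_n$. Using that $-R$ is strictly increasing on each interval of its domain and that its zero set equals $\{-\mu_k\}_{k=1}^n$, a direct analysis on $(-\infty,\mu_n)$ and on each $(\mu_{j+1},\mu_j)$ produces the interlacing chain
\[
\mu_1>-\mu_n>\mu_2>-\mu_{n-1}>\mu_3>-\mu_{n-2}>\cdots.
\]
From this chain one reads off the magnitude ordering $|\mu_1|>|\mu_n|>|\mu_2|>|\mu_{n-1}|>\cdots$ together with alternating signs; relabelling the roots in decreasing order of absolute value then recovers exactly the inequalities defining $\mathbf{SI}_I$, with kind $I$ (not $II$) singled out by the positivity of the top entry $\mu_1$. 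The principal obstacle throughout is the sign bookkeeping: aligning the positivity of the residues $c_j$ with the alternating-sign, strictly-decreasing-modulus structure of $\mathbf{SI}_I$ is the technical crux of the equivalence.
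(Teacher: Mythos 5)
Your proposal is correct, but it proves the theorem by a different route than the paper. For the forward direction the paper does not compute residues at all: it invokes the general equivalence (from \cite{Holtz_Tyaglov}) that real, simple, interlacing zeros and poles force $R$ to map the upper half-plane either to itself or to the lower half-plane, and then pins down the orientation by a single observation on the real line, namely that $R(z)\to 1$ as $z\to+\infty$ while the largest zero $-\lambda_2$ lies below the largest pole $\lambda_1$, so $R>1$ on $(\lambda_1,+\infty)$ and hence $R$ is decreasing, i.e.\ maps the upper half-plane to the lower one. Your argument instead goes through the explicit factorization $R(z)=\prod_k(z+\lambda_k)/(z-\lambda_k)$, the residue formula, and the sign count on $(\lambda_j+\lambda_k)/(\lambda_j-\lambda_k)$; I checked this count and it is right (negative for $k<j$ by the magnitude separation, positive for $k>j$, combining with $\operatorname{sign}\lambda_j=(-1)^{j-1}$ to give $c_j>0$). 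This buys you a more self-contained proof that needs only the Nevanlinna-type partial-fraction representation, not the interlacing-versus-$R$-function equivalence, at the cost of more computation; it also makes transparent why kind $II$ flips all residues. For the converse the two arguments are essentially the same in spirit --- both deduce real simple interlacing poles and zeros from the mapping property and then use monotonicity of $R$ on the real line --- except that the paper stops after noting that the largest pole exceeds the largest zero (which together with interlacing already forces kind $I$), whereas you reconstruct the full chain $\mu_1>-\mu_n>\mu_2>-\mu_{n-1}>\cdots$ and relabel; your version is longer but makes the sign and magnitude bookkeeping fully explicit. Two cosmetic points: the image is the \emph{open} lower half-plane (each term $c_j/(z-\mu_j)$ with $c_j>0$ has strictly negative imaginary part on the upper half-plane), and in the converse you should state explicitly that the absence of cancellation (guaranteed by ``exactly $n$ poles'') is what identifies the zero set of $R$ with $\{-\mu_k\}$.
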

\begin{proof}
Let the polynomial $p(z)$ be self-interlacing of kind $I$. Then by definition, the roots and poles of the function $R(z)$ are real, simple and interlacing.
In particular, $R(z)$ has exactly $n$ poles.
Thus $R(z)$ maps the upper half-plane to itself or to the lower half-plane, and it is monotone
on the real line between its poles  (see e.g.~\cite[Chapter 3]{Holtz_Tyaglov}). Now since $R(z)\to1$ as $z\to+\infty$, and its largest zero $-\lambda_{2}(>0)$ is smaller than
its largest pole $\lambda_1(>0)$, where $\lambda_j$ are the roots of $p(z)$, one has $R(z)>1$ on the interval $(\lambda_1,+\infty)$.
Consequently, $R(z)$ is decreasing on the real line and therefore maps the upper half-plane to the lower half-plane (see~\cite{Holtz_Tyaglov}), as required.

Conversely, if $R(z)$ maps the upper half-plane to the lower half-plane and has exactly $n$ poles, then the roots of its numerator $p(-z)$ and denominator $p(z)$
are real, simple and interlacing (see~\cite[Theorem 3.4]{Holtz_Tyaglov}), and $p(z)$ and $p(-z)$ have no common roots. Moreover, since $R(z)$ is decreasing on the real line, its largest
pole (the largest root of $p(z)$) is greater than its largest root (the largest root of $p(-z)$), therefore, $p(z)\in\mathbf{SI}_{I}$.
\end{proof}

The function $R(z)$ has interesting properties even for arbitrary polynomial $p(z)$ but for self-interlacing
polynomials this function plays the most important role.

The following theorem will give us a tool to prove an analogue of the famous Hurwitz stability criterion for self-interlacing polynomials.

%
%
\begin{theorem}\label{Theorem.main.self-interlacing}
Let $p$ be a real polynomial of even (odd) degree $n$ as in~\eqref{main.polynomial}.
The polynomial $p\in\mathbf{SI}_I$ is self-interlacing if and only if its associated function~$\Phi$ defined
in~\eqref{assoc.function} maps the upper half-plane to itself and has exactly $l$ poles all of which are
positive (nonnegative) poles. Here $l$ is defined in~\eqref{floor.poly.degree}.
%
%
%
%
%
\end{theorem}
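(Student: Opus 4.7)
The plan is to reduce the theorem to Theorem~\ref{Theorem.SI.first.new} by translating its conclusion about $R(z)$ into one about $\Phi(u)$. The bridge is the M\"obius transformation $T(w)=(1-w)/(1+w)$, which is an involution exchanging the upper and lower half-planes (note $T(i)=-i$). By~\eqref{poly1.1}--\eqref{poly1.3} we have $z\Phi(z^2)=T(R(z))$, so Theorem~\ref{Theorem.SI.first.new} is equivalent to: $p\in\mathbf{SI}_I$ if and only if the function $\psi(z):=z\Phi(z^2)$ maps the upper half-plane into itself and has exactly $n$ poles. The theorem will then follow once we show this property of $\psi$ is equivalent to $\Phi$ mapping the upper half-plane to itself with only positive (respectively nonnegative) poles.

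The main step is to exploit the oddness of $\psi$. A real rational R-function admits a partial fraction expansion
$$\psi(z)=\alpha z+\beta+\sum_k \frac{\gamma_k}{\mu_k-z}$$
with $\alpha\geq 0$, $\beta\in\mathbb R$, $\gamma_k>0$, $\mu_k\in\mathbb R$; the oddness of $\psi$ forces $\beta=0$ and organizes the poles either at $\mu=0$ (a single term $-\gamma_0/z$ with $\gamma_0>0$) or in symmetric pairs $\pm\nu$ with $\nu>0$ and equal residues. Combining each pair via $\frac{1}{\nu-z}+\frac{1}{-\nu-z}=\frac{2z}{\nu^2-z^2}$ and substituting $u=z^2$ in $\psi(z)/z$ yields
$$\Phi(u)=\alpha+\frac{\gamma_0}{0-u}+\sum_{\nu>0}\frac{2\gamma_\nu}{\nu^2-u},$$
which is precisely the canonical partial fraction form of an R-function whose poles lie in $[0,+\infty)$. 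Reversing the calculation establishes the converse.

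Parity enters through the pole at $0$: comparing the degrees of the numerator and denominator of $z\Phi(z^2)$, which are built from the odd and even parts of $p$, one sees that $\Phi$ has a pole at the origin precisely when $n$ is odd, so ``positive'' becomes ``nonnegative'' in the odd case, while the total pole count of $\psi$ is $n$ in both cases. I expect the main obstacle to lie in the middle step: rigorously using oddness to pin down the pole structure of $\psi$ and match residues across symmetric pairs, and handling the residue at $z=0$ (needed for odd $n$) without breaking the R-function normal form. Once that bookkeeping is complete, the equivalence asserted by the theorem follows immediately from Theorem~\ref{Theorem.SI.first.new}.
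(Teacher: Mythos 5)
Your proposal follows essentially the same route as the paper: reduce to Theorem~\ref{Theorem.SI.first.new} via the involution $w\mapsto(1-w)/(1+w)$ from~\eqref{poly1.1}--\eqref{poly1.3}, then use the oddness of $z\Phi(z^2)$ to pair its poles symmetrically in the Nevanlinna/partial-fraction representation and substitute $u=z^2$ to land $\Phi$ in R-function form with poles in $[0,+\infty)$, the pole at the origin accounting for the parity distinction. The only cosmetic difference is that you carry the terms $\alpha z+\beta$ which the paper discards at once because $z\Phi(z^2)$ vanishes at infinity (its numerator has smaller degree than its denominator by~\eqref{assoc.function}); this does not affect the argument.
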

\begin{proof}
Indeed, let $p\in\mathbf{SI}_I$, so $p(z)$ and $p(-z)$ have no common roots. Then by Theorem~\ref{Theorem.SI.first.new} the function~$R(z)$ defined in~\eqref{auxiliary.rational.function} maps the upper half-plane to the lower half-plane.
At the same time, the function $\dfrac{1-w}{1+w}$ maps the lower half-plane to the upper half-plane as it is easy to check. Thus, from~\eqref{poly1.1} we obtain that the function $z\Phi(z^2)$ maps the upper half-plane to itself. Therefore (see e.g.~\cite[Theorem~3.4]{Holtz_Tyaglov}), $z\Phi(z^2)$ has the form
\begin{equation}\label{Theorem.main.self-interlacing.proof.1.new}
z\Phi(z^2)=\sum_{k=1}^r\dfrac{\alpha_k}{\mu_k-z}-\sum_{k=1}^r\dfrac{\alpha_k}{\mu_k+z}-\dfrac{\alpha_0}{z},
\end{equation}
where\footnote{It is easy to see that $l=r$ if $n$ is even, and $l=r+1$ if $n$ is odd.} $\alpha_0\geqslant0$ ($=0$ if and only if $n=2r$), $\alpha_k>0$, $k=1,\ldots,r$,
\begin{equation}\label{floor.poly.degree.2}
r\stackrel{def}{=}\left[\dfrac{n}2\right],
\end{equation}
and
\begin{equation*}
0<\mu_1<\mu_2<\cdots<\mu_r.
\end{equation*}
Here we take into account the facts that $z\Phi(z^2)$ is an odd function, and that the degree of the numerator of $z\Phi(z^2)$ is $n=\deg p$, while the degree of its denominator is $n-1$.
Thus the function $\Phi(u)$ has the form
\begin{equation}\label{Mittag.Leffler}
\Phi(u)=-\dfrac{\beta_0}{u}+\sum_{k=1}^r\frac{\beta_k}{\omega_k-u},
\end{equation}
where $\beta_0=\alpha_0\geqslant0$ ($=0$ if and only if $n=2r$), $\beta_k=2\alpha_k>0$, and $\omega_k=\mu_k^2>0$, $k=1,\ldots,r$,
Here $r$ is defined in~\eqref{floor.poly.degree.2}. So (see~\cite[Theorem 3.4]{Holtz_Tyaglov}) the
function $\Phi(u)$ maps the upper half-plane to itself, and all its poles are simple and positive for $n=2r$ or nonnegative for $n=2r+1$, as required.

Conversely, if $\Phi(u)$ maps the upper half-plane to itself, and has only positive poles whenever $n=2r$ and nonnegative poles whenever $n=2r+1$, then by Theorem 3.4 from~\cite{Holtz_Tyaglov}, the function
$\Phi(u)$ has the form~\eqref{Mittag.Leffler} with positive $\beta_k$, distinct positive $\omega_k$, $k=1,\ldots,r$, and nonnegative $\beta_0$ (which is zero only for even $n$).
Here we took into account that the degree of its numerator is less than the degree of its
denominator as it follows from~\eqref{assoc.function}. Thus, $z\Phi(z^2)$ can be presented as
in~\eqref{Theorem.main.self-interlacing.proof.1.new}, where $\lambda_k=\sqrt{\omega_k}>0$, and $\alpha_k=\dfrac{\beta_k}2>0$,
$k=1,\ldots,r$, $\alpha_0=\beta_0\geqslant0$ ($=0$ if and only if $n=2r$). Therefore~\cite[Chapter 3]{Holtz_Tyaglov}, the function $z\Phi(z^2)$ maps the upper half-plane to itself,
so by~\eqref{poly1.3} the function $R(z)$ maps the upper half-plane to the lower half-plane and has exactly $n$ poles. Now
Theorem~\ref{Theorem.SI.first.new} implies $p\in\mathbf{SI}_I$.
\end{proof}

\begin{remark}
Another, more complicated, proof of Theorem~\ref{Theorem.main.self-interlacing} can be found in the technical report~\cite{Tyaglov_GHP} (see Theorem~4.3 there).
\end{remark}

Note that the degrees of the numerator and denominator of $\Phi(u)$ are, respectively, $l-1$ and $l$, where $l$ is defined in~\eqref{floor.poly.degree}, so $\Phi(u)$ tends
to zero as $u$ tends to infinity. Expand now the function $\Phi(u)$ into Laurent series at infinity:
\begin{equation}\label{app.assoc.function.series}
\Phi(u)=\dfrac{p_1(u)}{p_0(u)}=\frac{s_0}u+\frac{s_1}{u^2}+\frac{s_2}{u^3}+\frac{s_3}{u^4}+\dots,
\end{equation}
and construct two sequences of Hankel determinants:
\begin{equation}\label{Hankel.determinants.1}
D_j(\Phi)=
\begin{vmatrix}
    s_0 &s_1 &s_2 &\dots &s_{j-1}\\
    s_1 &s_2 &s_3 &\dots &s_j\\
    \vdots&\vdots&\vdots&\ddots&\vdots\\
    s_{j-1} &s_j &s_{j+1} &\dots &s_{2j-2}
\end{vmatrix},\quad j=1,2,3,\dots,
\end{equation}
and
\begin{equation}\label{Hankel.determinants.2}
\widehat{D}_j(\Phi)=
\begin{vmatrix}
    s_1 &s_2 &s_3 &\dots &s_{j}\\
    s_2 &s_3 &s_4 &\dots &s_{j+1}\\
    \vdots&\vdots&\vdots&\ddots&\vdots\\
    s_{j} & s_{j+1} & s_{j+2} & \dots &s_{2j-1}
\end{vmatrix},\quad j=1,2,3,\dots,
\end{equation}
using the coefficients of the expansion~\eqref{app.assoc.function.series}.

From the results of~\cite[Chapter 3]{Holtz_Tyaglov} (see references there) one can obtain the following lemmata.
\begin{lemma}\label{Lemma.R-functions.Hankel.1}
The function $\Phi(u)$ maps the upper half-plane of the complex plane to itself and has exactly~$l$ poles
if and only if the following inequalities hold
\begin{equation*}\label{Theorem.total.nonnetativity.Hankel.matrix.conditions.1}
(-1)^jD_j(\Phi)>0,\qquad j=1,\ldots,l,
\end{equation*}
\begin{equation*}\label{Theorem.total.nonnetativity.Hankel.matrix.conditions.1.1}
D_j(\Phi)=0,\qquad j>l.
\end{equation*}
\end{lemma}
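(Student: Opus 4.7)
The plan is to combine the Mittag--Leffler representation of rational $R$-functions (Theorem~3.4 of~\cite{Holtz_Tyaglov}) with the Cauchy--Binet identity applied to the resulting Hankel form.

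For the \emph{only if} direction, I note that the degree inequality $\deg p_1<\deg p_0$ forced by~\eqref{assoc.function} rules out any polynomial part in the Mittag--Leffler expansion. Hence if $\Phi$ maps the upper half-plane to itself and has exactly $l$ poles, then
$$\Phi(u)\;=\;\sum_{k=1}^{l}\frac{\beta_k}{\omega_k-u},\qquad \beta_k>0,\quad \omega_1<\dots<\omega_l\in\mathbb{R}.$$
Expanding each summand geometrically at infinity yields the Laurent coefficients $s_m=-\sum_{k=1}^{l}\beta_k\omega_k^m$, so that the $j\times j$ Hankel matrix $\bigl(s_{p+q-2}\bigr)_{p,q=1}^{j}$ factors as $-V_j\,\mathrm{diag}(\beta_1,\dots,\beta_l)\,V_j^{\top}$, where $V_j$ is the $j\times l$ Vandermonde-type matrix with entries $\omega_k^{p-1}$. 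Applying the Cauchy--Binet formula then produces the explicit identity
$$D_j(\Phi)\;=\;(-1)^j\!\!\sum_{1\leq k_1<\dots<k_j\leq l}\!\beta_{k_1}\cdots\beta_{k_j}\!\!\prod_{1\leq i<i'\leq j}(\omega_{k_i}-\omega_{k_{i'}})^2.$$
Since the $\omega_k$ are distinct and the $\beta_k$ strictly positive, every summand is strictly positive, giving $(-1)^j D_j(\Phi)>0$ for $1\leq j\leq l$; for $j>l$ the index set is empty (equivalently, the Hankel form has rank at most $l$), and so $D_j(\Phi)=0$.

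For the converse I would appeal to the classical correspondence between finite-rank Hankel sequences and proper rational functions with simple poles. Kronecker's theorem identifies such a sequence with the Laurent series of a unique proper rational function having exactly $l$ simple poles, while the alternating sign condition $(-1)^jD_j(\Phi)>0$ controls the signature of the Hankel form via the Frobenius--Sylvester law of inertia and forces those poles to be real and the associated residues (in the convention above) positive. This produces a Mittag--Leffler expansion of the form displayed above, and Theorem~3.4 of~\cite{Holtz_Tyaglov} then immediately implies that $\Phi$ maps the upper half-plane to itself with exactly $l$ poles.

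The main obstacle I expect is the converse direction: a naive rank or Cauchy--Binet computation only recovers the rank, not the reality of the poles nor the positivity of the residues, and extracting both from the sign pattern of the $D_j(\Phi)$ alone requires the full Hamburger-type inertia analysis of Hankel forms. In the formal write-up one would simply invoke the relevant theorem from~\cite[Chapter~3]{Holtz_Tyaglov} rather than reproducing this classical signature argument.
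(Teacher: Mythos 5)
Your proposal is correct, and it is essentially the argument the paper relies on: the paper gives no proof of this lemma at all, deriving it directly from \cite[Chapter~3]{Holtz_Tyaglov}, and your Mittag--Leffler/Cauchy--Binet computation for the forward direction together with the Kronecker--Frobenius--Hamburger signature argument for the converse is precisely the standard reasoning behind that citation. The one point worth making explicit in a write-up is that the converse uses the vanishing of \emph{all} $D_j(\Phi)$ for $j>l$ (not just $D_{l+1}$) to conclude via Frobenius that the infinite Hankel matrix has rank exactly $l$ before Kronecker's theorem is invoked, but you have stated that hypothesis and used it correctly.
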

\begin{lemma}\label{Lemma.R-functions.Hankel.2}
If the function $\Phi(u)$ maps the upper half-plane of the complex plane to itself, then
it has only positive poles if and only if the following inequalities hold
\begin{equation}\label{Theorem.total.nonnetativity.Hankel.matrix.conditions.2}
(-1)^j\widehat{D}_j(\Phi)>0,\qquad j=1,\ldots,l.
\end{equation}
\begin{equation*}\label{Theorem.total.nonnetativity.Hankel.matrix.conditions.2.1}
\widehat{D}_j(\Phi)=0,\qquad j>l.
\end{equation*}
where $l$ is the number of poles of the function $\Phi(u)$.
\end{lemma}
\begin{lemma}\label{Lemma.R-functions.Hankel.3}
If the function $\Phi(u)$ maps the upper half-plane of the complex plane to itself, then
it has only nonnegative poles if and only if the following inequalities hold
\begin{equation}\label{Theorem.total.nonnetativity.Hankel.matrix.conditions.3}
(-1)^j\widehat{D}_j(\Phi)>0,\qquad j=1,\ldots,l-1.
\end{equation}
\begin{equation*}\label{Theorem.total.nonnetativity.Hankel.matrix.conditions.3.1}
\widehat{D}_j(\Phi)=0,\qquad j\geqslant l.
\end{equation*}
where $l$ is the number of poles of the function $\Phi(u)$.
\end{lemma}

Applying the famous Hurwitz formula (see \cite[p.\;214]{Gantmakher}, \cite[Theorem 1.5]{Holtz_Tyaglov} and references there) for determinants to the function~$\Phi(u)$, we get the following
\begin{equation}\label{Hurwitz.formula.1}
D_j(\Phi)=\dfrac1{a_0^{2j}}
\begin{vmatrix}
a_0&a_2&a_4&a_6&a_{8}&\dots&a_{4j-2}\\
0  &a_1&a_3&a_5&a_7&\dots&a_{4j-3}\\
0  &a_0&a_2&a_4&a_6&\dots&a_{4j-4}\\
0  &0  &a_1&a_3&a_5&\dots&a_{4j-5}\\
0  &0  &a_0&a_2&a_4&\dots&a_{4j-6}\\
\vdots&\vdots&\vdots&\vdots&\vdots&\ddots&\vdots\\
0  &0  &0  &0  &0  &\dots&a_{2j-1}
\end{vmatrix}=\dfrac1{a_0^{2j-1}}\Delta_{2j-1}(p),\quad j=1,\ldots,l,
\end{equation}
where $l$ is defined in~\eqref{floor.poly.degree}, and $\Delta_j(p)$ are defined in~\eqref{Hurwitz.minors}. Furthermore, noting that $\widehat{D}_j(\Phi)=D_j(u\Phi(u))$, $j=1,2,\ldots$ we obtain
\begin{equation}\label{Hurwitz.formula.2}
\widehat{D}_j(\Phi)=\dfrac1{a_0^{2j}}
\begin{vmatrix}
a_0&a_2&a_4&a_6&\dots&a_{4j-4}&a_{4j-2}\\
a_1&a_3&a_5&a_7&\dots&a_{4j-3}&a_{4j-1}\\
0  &a_0&a_2&a_4&\dots&a_{4j-6}&a_{4j-4}\\
0  &a_1&a_3&a_5&\dots&a_{4j-5}&a_{4j-3}\\
0  &0  &a_0&a_2&\dots&a_{4j-8}&a_{4j-6}\\
\vdots&\vdots&\vdots&\vdots&\ddots&\vdots&\vdots\\
0  &0  &0  &0  &\dots&a_{2j-2}  &a_{2j}\\
0  &0  &0  &0  &\dots&a_{2j-1}  &a_{2j+1}
\end{vmatrix}=\dfrac{(-1)^j}{a_0^{2j}}\Delta_{2j}(p),\quad j=1,\ldots,r,
\end{equation}
where $r$ is defined in~\eqref{floor.poly.degree.2}.

Thus, from Theorem~\ref{Theorem.main.self-interlacing}, Lemmata~\ref{Lemma.R-functions.Hankel.1}--\ref{Lemma.R-functions.Hankel.3} and
formul\ae~\eqref{Hurwitz.formula.1}--\eqref{Hurwitz.formula.2}, we obtain the following criterion of
self-interlacing, which is an analogue of the~Hurwitz stability
criterion.

\vspace{3mm}

%
\noindent\textbf{Theorem 1.2.} \textit{A real polynomial $p$ of degree $n$ as
in~\eqref{main.polynomial} belongs to the class $\mathbf{SI}_I$ if and only if
the Hurwitz minors $\Delta_j(p)$ satisfy the
inequalities:
\begin{equation}\label{Hurvitz.det.noneq.self-interlacing.2}
(-1)^j\Delta_{2j-1}(p)>0,\qquad j=1,\ldots,l,
\end{equation}
\begin{equation}\label{Hurvitz.det.noneq.self-interlacing.1}
\Delta_{2j}(p)>0,\qquad j=1,\ldots,r,
\end{equation}
where $l$ and $r$ are defined in~\eqref{floor.poly.degree} and~\eqref{floor.poly.degree.2}, respectively.}
%
\vspace{3mm}
\begin{proof}
Let $n=2l$. Then by Theorem~\ref{Theorem.main.self-interlacing}, $p(z)\in\mathbf{SI}_I$ if and only if the function $\Phi(u)$ defined in~\eqref{assoc.function}
maps the upper half-plane to the lower half-plane and has exactly $l$ positive poles ($l=r$ in this case). This is equivalent to the
inequalities~\eqref{Hurvitz.det.noneq.self-interlacing.2}--\eqref{Hurvitz.det.noneq.self-interlacing.1}, according to
Lemmata~\ref{Lemma.R-functions.Hankel.1}--\ref{Lemma.R-functions.Hankel.2} and formul\ae~\eqref{Hurwitz.formula.1}--\eqref{Hurwitz.formula.2}.

If $n=2l+1$, then according to Theorem~\ref{Theorem.main.self-interlacing}, $p(z)\in\mathbf{SI}_I$ if and only if the function $\Phi(u)$
maps the upper half-plane to the lower half-plane and has exactly $r$ ($r=l-1$ in this case) positive poles and one pole at zero ($l$ poles in total). By
Lemmata~\ref{Lemma.R-functions.Hankel.1} and~\ref{Lemma.R-functions.Hankel.3} and by formul\ae~\eqref{Hurwitz.formula.1}--\eqref{Hurwitz.formula.2}, this is equivalent to the
inequalities~\eqref{Hurvitz.det.noneq.self-interlacing.2}--\eqref{Hurvitz.det.noneq.self-interlacing.1}, as required.
\end{proof}

Note that~\eqref{Hurvitz.det.noneq.self-interlacing.2}
is equivalent to the following inequalities
\begin{equation*}\label{Hurvitz.det.noneq.self-interlacing.5}
\Delta_{2i-1}(p)\Delta_{2i+1}(p)<0,\quad i=0,1,\ldots,\left[\dfrac{n-1}2\right],
\end{equation*}
where $\Delta_{-1}(p)\equiv1$.

\vspace{3mm}

Following~\cite[Chapter 3]{Holtz_Tyaglov} we note that if the function $\Phi(u)$ maps the upper half-plane to itself, then we can simplify
the conditions~\eqref{Theorem.total.nonnetativity.Hankel.matrix.conditions.2} and~\eqref{Theorem.total.nonnetativity.Hankel.matrix.conditions.3}
using the coefficients of the denominator of $\Phi(u)$. Indeed, if a polynomial
$$
q(z)=d_0z^m+d_1z^{m-1}+\cdots+d_{m-1}z+d_m,\quad d_0>0,
$$
has only real roots, then by the Descartes Rule of Signs (see e.g.~\cite[Part V, Chapter 1]{Polya&Szego}) all its roots are positive if and only if
$$
d_{k-1}d_k<0,\qquad k=1,\ldots,m,
$$
or, that is the same,
$$
(-1)^kd_k<0,\qquad k=0,1,\ldots,m.
$$

This remark together with Theorem~\ref{Theorem.main.self-interlacing} and Lemma~\ref{Lemma.R-functions.Hankel.1} immediately implies
Theorem~\ref{Theorem.Lienard.Chipart.intro} which is an analogue of the stability criterion due to Li\'enard and Chipart~\cite{LienardChipart},~\cite[p.\;221]{Gantmakher} (see also~\cite{Holtz_Tyaglov} and references there).

%
%
%
%
%
%
Analogously to Theorem 11 from~\cite[Chap.\;XV, Sec.\;13]{Gantmakher} and Theorem 3.34 from~\cite{Holtz_Tyaglov}, one can easily establish three additional similar criteria for a real polynomial to be self-interlacing. We leave these simple exercises to the reader.

We end this section with the following remark.

\begin{remark}
A polynomial $p(z)$ of degree $n$ can always be represented as follows
\begin{equation*}
p(z)=\widehat{p}_0(z)+\widehat{p}_1(z),
\end{equation*}
where
\begin{equation}\label{poly.p_0}
\widehat{p}_0(z)=\dfrac{p(z)+(-1)^np(-z)}{2}=a_0z^{n}+a_2z^{n-2}+a_4z^{n-4}+\cdots,
\end{equation}
and
\begin{equation}\label{poly.p_1}
\widehat{p}_1(z)=\dfrac{p(z)-(-1)^np(-z)}{2}=a_1z^{n-1}+a_3z^{n-3}+a_5z^{n-5}+\cdots
\end{equation}

If $\widehat{p}_0(z)$ and $\widehat{p}_1(z)$ have real interlacing zeroes, then the
polynomial $p(z)=\widehat{p}_0(z)+\widehat{p}_1(z)$ has real zeroes as a linear combination of
polynomials with real interlacing zeroes~\cite{ChebotarevMeiman} (see also~\cite{Obreschkoff}). However, this notice
does not help to investigate the self-interlacing property of
polynomials. At the same time, this fact shows that roots of the self-interlacing polynomial
$p(z)$ interlace both roots of $\widehat{p}_0(z)$ and roots of $\widehat{p}_1(z)$, while the interlacing of roots of
$\widehat{p}_0(z)$ and $\widehat{p}_1(z)$, in this case, follows from the property of the function
$z\Phi(z^2)$ that maps the upper half-plane to itself~\cite[Chapter 3]{Holtz_Tyaglov}.
\end{remark}

\setcounter{equation}{0}

\section{Interrelation between Hurwitz stable and self-interlacing
polynomials}\label{section:connection.Hurwitz.SI}

In this section we establish that the classes of self-interlacing polynomials of kind $I$, $\mathbf{SI}_I$,
and \emph{real} Hurwitz stable polynomials (the polynomials whose roots lie
in the open left half-plane) are isomorphic by proving Theorem~\ref{Theorem.isomorphism}. This actually means that the set
of all stable real polynomials isomorphically embedded into the set of all
real polynomials with real simple roots. We prove the following theorem which is a rephrasing of Theorem~\ref{Theorem.isomorphism}.

\begin{theorem}\label{Theorem.connection.Hurwitz.self-interlacing}
A real polynomial
\begin{equation}\label{main.polynomial.2}
p(z)=\sum_{k=0}^na_kz^{n-k}
\end{equation}
belongs to the class $\mathbf{SI}_I$ if
 and only if the polynomial
\begin{equation}\label{main.polynomial.2.stable}
q(z)=\sum_{k=0}^n(-1)^{\tfrac{k(k+1)}2}a_kz^{n-k}
\end{equation}
is Hurwitz stable.

\end{theorem}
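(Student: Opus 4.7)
The plan is to combine the analogue of Hurwitz's criterion (Theorem~\ref{Theorem.self-interlacing.Hurwitz.criterion}) with the classical Hurwitz criterion for real stable polynomials by directly relating the Hurwitz minors of $p$ and $q$. I will work throughout in the descending-power convention of Section~\ref{section:Basic.theorems}: if $p(z)=a_{0}z^{n}+a_{1}z^{n-1}+\cdots+a_{n}$, then $q$ is the polynomial whose coefficient of $z^{n-k}$ is $\epsilon_{k}a_{k}$ with $\epsilon_{k}:=(-1)^{k(k+1)/2}$ (consistent with formul\ae~\eqref{main.poly}--\eqref{main.poly.q}). Since the $(r,c)$-entry of the Hurwitz matrix is $a_{2c-r}$ for $p$, the corresponding entry for $q$ is $\epsilon_{2c-r}\,a_{2c-r}$.

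The central step is the sign factorization
\[
\epsilon_{2c-r}=\epsilon_{r-1}\,(-1)^{c},
\]
which reduces modulo $2$ to the one-line algebraic identity
\[
\frac{(2c-r)(2c-r+1)}{2}-\frac{(r-1)r}{2}-c=2c(c-r).
\]
With this factorization in hand, pulling the row scalars $\epsilon_{r-1}$ and the column scalars $(-1)^{c}$ out of the $k\times k$ Hurwitz determinant gives
\[
\Delta_{k}(q)=\Bigl(\prod_{j=0}^{k-1}\epsilon_{j}\Bigr)(-1)^{k(k+1)/2}\,\Delta_{k}(p)=(-1)^{\binom{k+2}{3}}\,\Delta_{k}(p),
\]
where the exponent is obtained from $\sum_{j=0}^{k-1}j(j+1)/2=(k-1)k(k+1)/6=\binom{k+1}{3}$ together with Pascal's identity $\binom{k+1}{3}+\binom{k+1}{2}=\binom{k+2}{3}$.

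A short parity check on $\binom{k+2}{3}$ (odd precisely when $k\equiv 1\pmod{4}$) yields
\[
\Delta_{2j-1}(q)=(-1)^{j}\,\Delta_{2j-1}(p),\qquad \Delta_{2j}(q)=\Delta_{2j}(p).
\]
Therefore the classical Hurwitz inequalities $\Delta_{k}(q)>0$ for $k=1,\dots,n$ are term-for-term equivalent to the sign pattern $(-1)^{j}\Delta_{2j-1}(p)>0$, $\Delta_{2j}(p)>0$ of Theorem~\ref{Theorem.self-interlacing.Hurwitz.criterion}, and both directions of the claimed isomorphism follow simultaneously. The leading coefficient of $q$ is $\epsilon_{0}a_{0}=a_{0}>0$, so no positivity normalisation is needed. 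The principal obstacle is the sign-factorization identity $\epsilon_{2c-r}=\epsilon_{r-1}(-1)^{c}$; once that identity is established the rest of the argument is a routine row--column rescaling of a determinant combined with the binomial parity count.
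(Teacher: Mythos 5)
Your proof is correct, but it takes a genuinely different route from the paper's. The paper proves Theorem~\ref{Theorem.connection.Hurwitz.self-interlacing} by passing to the associated function $\Phi(u)$ of $p$ and observing that $\Psi(u)=\Phi(-u)$ is the associated function of $q$; the mapping/pole conditions of Theorem~\ref{Theorem.main.self-interlacing} for $\Phi$ then transform into the conditions characterizing stability of $q$, via the Hankel-determinant lemmata and formul\ae~\eqref{Hurwitz.formula.1}--\eqref{Hurwitz.formula.2}. You instead bypass the rational functions entirely: your sign factorization $\epsilon_{2c-r}=\epsilon_{r-1}(-1)^{c}$ is exactly the diagonal factorization $\mathcal{H}_n(q)=C_n\mathcal{H}_n(p)E_n$ that the paper only introduces later, in~\eqref{SI_Stab.connection.7} of Section~\ref{section:properties}, and your identity $2c(c-r)$ for the exponent difference is a correct verification of it. Restricting to leading principal minors and using the hockey-stick and Pascal identities, you get the explicit sign relation $\Delta_{2j-1}(q)=(-1)^j\Delta_{2j-1}(p)$, $\Delta_{2j}(q)=\Delta_{2j}(p)$ (the paper's remark after the theorem only records $|\Delta_i(p)|=|\Delta_i(q)|$ and defers the signs to Section~\ref{section:properties}), and the equivalence then follows by matching the classical Hurwitz criterion against Theorem~\ref{Theorem.self-interlacing.Hurwitz.criterion}. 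This is legitimate and non-circular, since Theorem~\ref{Theorem.self-interlacing.Hurwitz.criterion} is established in Section~\ref{section:Basic.theorems} without reference to Theorem~\ref{Theorem.connection.Hurwitz.self-interlacing}; but you should state that dependency explicitly, since your argument transfers the analytic work to that earlier theorem rather than eliminating it. What your approach buys is a purely determinantal, self-contained derivation with explicit minor-by-minor signs; what the paper's approach buys is independence from the Hurwitz-type criterion and a conceptual explanation (the substitution $u\mapsto -u$ in $\Phi$) of why the duality holds. One minor point handled correctly: you adopt the descending-power convention of~\eqref{main.poly}--\eqref{main.poly.q}, which is the one actually used in the paper's proof, the ascending indexing in~\eqref{main.polynomial.2} being inconsistent with it.
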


\begin{definition}
We call the polynomial $p\in\mathbf{SI}_I$ defined in~\eqref{main.polynomial.2} and the Hurwitz stable polynomial $q$ defined in~\eqref{main.polynomial.2.stable}
\emph{dual} to each other.
\end{definition}
\begin{proof}[Proof of Theorem~\ref{Theorem.connection.Hurwitz.self-interlacing}]
Indeed, by Theorem~\ref{Theorem.main.self-interlacing}, the polynomial $p\in\mathbf{SI}_I$ if and only if the function~\eqref{assoc.function}
\begin{equation*}\label{Theorem.connection.Hurwitz.self-interlacing.proof.1}
\Phi(u)=\dfrac{a_1u^{l-1}+a_3u^{l-2}+a_5u^{l-2}+\cdots}{a_0u^{l}+a_2u^{l-1}+a_4u^{l-2}+\cdots},\qquad l=\left[\dfrac{n+1}2\right],
\end{equation*}
maps the upper half-plane to itself and has only positive (for $n=2r$) or nonnegative (for $n=2r+1$) poles. In its turn, it is equivalent
to the fact that the function
\begin{equation*}\label{Theorem.connection.Hurwitz.self-interlacing.proof.2}
\Psi(u)=\Phi(-u)=\dfrac{-a_1u^{l-1}+a_3u^{l-2}-a_5u^{l-2}+\cdots}{a_0u^{l}-a_2u^{l-1}+a_4u^{l-2}+\cdots}=\sum_{j=0}^{+\infty}\dfrac{t_j}{z^{j+1}}
\end{equation*}
maps the upper half-plane to the lower half-plane and has only negative (for $n=2r$) or nonpositive (for $n=2r+1$) poles. Moreover,
\begin{equation*}
z\Psi(z^2)=\dfrac{q(z)-(-1)^nq(-z)}{q(z)+(-1)^nq(-z)},
\end{equation*}
so $\Psi(u)$ is the function associated with the polynomial $q$. From the results
of~\cite[Chapter 3]{Holtz_Tyaglov} (see also references there) it follows that $\Psi(u)$ maps the upper half-plane to the lower one if and only if
the Hankel determinants $D_k(\Psi)$ constructed with the coefficients $t_j$ of the Laurent series of $\Psi$ at infinity
satisfy the inequalities
\begin{equation*}\label{Theorem.connection.Hurwitz.self-interlacing.proof.3}
D_j(\Psi)>0,\quad j=1,\ldots,l,
\end{equation*}
so from the formula~\eqref{Hurwitz.formula.1} applied to the function $\Psi(u)$ and the polynomial $q$ we obtain
\begin{equation}\label{Theorem.connection.Hurwitz.self-interlacing.proof.4}
\Delta_{2j-1}(q)>0,\quad j=1,\ldots,l.
\end{equation}
Moreover, by Lemma~\ref{Lemma.R-functions.Hankel.1} the function $\Psi(u)$ maps the upper half-plane to itself if and only if
the following inequalities hold
\begin{equation*}\label{Theorem.connection.Hurwitz.self-interlacing.proof.5}
(-1)^j\widehat{D}_j(\Psi)>0,\quad j=1,\ldots,r,
\end{equation*}
so from~\eqref{Hurwitz.formula.2} we have
\begin{equation}\label{Theorem.connection.Hurwitz.self-interlacing.proof.6}
\Delta_{2j}(q)>0,\quad j=1,\ldots,r.
\end{equation}
The inequalities~\eqref{Theorem.connection.Hurwitz.self-interlacing.proof.4} and~\eqref{Theorem.connection.Hurwitz.self-interlacing.proof.6} mean that
all the Hurwitz determinants $\Delta(q)$ of the polynomial $q$ are positive, so $q$ is Hurwitz stable by Hurwitz
stability criterion~\cite{Gantmakher}.
\end{proof}

\begin{remark}
According to a formula in the proof of~\cite[Corollary~3.12]{Holtz_Tyaglov}, $|D_j(\Phi)|=|D_j(\Psi)|$, $j=1,\ldots,l$,
and $|\widehat{D}_j(\Phi)|=|\widehat{D}_j(\Psi)|$, $j=1,\ldots,r$. Thus, from the formul\ae~\eqref{Hurwitz.formula.1}--\eqref{Hurwitz.formula.2}, we
have
$$
|\Delta_i(p)|=|\Delta_i(q)|,\qquad i=1,\ldots,n,
$$
so the Hurwitz minors of the dual polynomials $p$ and $q$ differ only by signs. In Section~\ref{section:properties}
we prove that \textit{all} the Hurwitz minors of the polynomials $p$ and $q$ possess the same property.
\end{remark}

As an immediate consequence of
Theorem~\ref{Theorem.connection.Hurwitz.self-interlacing}, we
obtain the following analogue of Stodola's theorem claiming that all the coefficients of a real stable
polynomial are of the same sign~\cite{Gantmakher}.

\begin{theorem}\label{Th.Stodola.necessary.condition.self-interlacing}
If the polynomial $p$ defined in~\eqref{main.polynomial.2} belongs to the class $\mathbf{SI}_I$, then

\begin{equation}\label{Th.Stodola.necessary.condition.self-interlacing.condition}
(-1)^{\tfrac{j(j+1)}2}a_j>0,\qquad j=0,1,\ldots,n.
\end{equation}
\end{theorem}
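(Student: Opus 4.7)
The plan is to deduce the statement as an immediate corollary of the duality established in Theorem~\ref{Theorem.connection.Hurwitz.self-interlacing}, combined with the classical Stodola necessary condition for real Hurwitz stable polynomials. No fresh machinery is required.

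First I would invoke Theorem~\ref{Theorem.connection.Hurwitz.self-interlacing}: the hypothesis $p \in \mathbf{SI}_I$ is equivalent to the statement that the dual polynomial
\begin{equation*}
q(z)=\sum_{k=0}^n(-1)^{\tfrac{k(k+1)}2}a_kz^{n-k}
\end{equation*}
is Hurwitz stable. Under the standing sign convention $a_0>0$ (the leading coefficient of $p$ in~\eqref{main.polynomial}), the leading coefficient of $q$ corresponds to $k=0$ and equals $a_0$, which is strictly positive.

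Next I would apply the classical Stodola theorem (see e.g.~\cite{Gantmakher}), which asserts that every real Hurwitz stable polynomial with positive leading coefficient has all of its coefficients strictly positive. Applied to $q$, this yields
\begin{equation*}
(-1)^{\tfrac{k(k+1)}2}a_k>0,\qquad k=0,1,\ldots,n,
\end{equation*}
which is exactly the claimed inequality~\eqref{Th.Stodola.necessary.condition.self-interlacing.condition}.

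There is essentially no obstacle; the only point requiring care is a bookkeeping check that the coefficient of $z^{n-k}$ in $q$ is indeed $(-1)^{k(k+1)/2}a_k$ with the sign pattern $+,-,-,+,+,-,-,+,\ldots$ displayed in~\eqref{main.poly.q}, which reduces to evaluating $k(k+1)/2\pmod{2}$ and is routine.
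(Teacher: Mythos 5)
Your proposal is correct and follows exactly the paper's own argument: invoke Theorem~\ref{Theorem.connection.Hurwitz.self-interlacing} to pass to the dual Hurwitz stable polynomial $q$, then apply the classical Stodola theorem to conclude all coefficients of $q$ are positive, which is precisely~\eqref{Th.Stodola.necessary.condition.self-interlacing.condition}. Nothing further is needed.
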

\begin{proof}
In fact, if the polynomial $p$ is self-interlacing, then by Theorem~\ref{Theorem.connection.Hurwitz.self-interlacing} the
polynomial $q$ defined in~\eqref{main.polynomial.2.stable} is Hurwitz stable. But by Stodola's theorem its
coefficients are positive (since $a_0>0$), that implies~\eqref{Th.Stodola.necessary.condition.self-interlacing.condition}.
\end{proof}

\begin{remark}
Theorem~\ref{Th.Stodola.necessary.condition.self-interlacing} was
proved in~\cite{Fisk} by another method (see~\cite[Lemma 2.6]{Fisk}).
\end{remark}

Let us point out at one more interesting connection between
Hurwitz stable and self-interlacing polynomials. To do this we need the following simple fact.

\begin{prepos}\label{Proposition.stable.SI.connection}
If $p\in\mathbf{SI}_I$, then the Hurwitz stable polynomial $q$
defined in~\eqref{main.polynomial.2.stable}, can be represented as follows
\begin{equation}\label{self-in.stable.connection.one.more}
q(z)=i^{-n}p(iz)\dfrac{1-i}2+i^np(-iz)\dfrac{1+i}2.
\end{equation}
\end{prepos}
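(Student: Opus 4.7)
The plan is to verify \eqref{self-in.stable.connection.one.more} as a purely formal polynomial identity between the coefficient vector $(a_0,\dots,a_n)$ of $p$ and the (twisted) coefficient vector of $q$. The self-interlacing hypothesis on $p$ enters only through Theorem~\ref{Theorem.connection.Hurwitz.self-interlacing}, which identifies the polynomial produced by the right-hand side with the Hurwitz stable companion \eqref{main.polynomial.2.stable}; no analytic content is involved.

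First I would write $p(z) = \sum_{k=0}^n a_k z^{n-k}$ in the convention of \eqref{main.poly}. The substitutions $z \mapsto \pm iz$ give $p(\pm iz) = \sum_{k=0}^n a_k(\pm i)^{n-k} z^{n-k}$. After multiplying by the prefactors $i^{-n}(1-i)/2$ and $i^n(1+i)/2$ and simplifying the powers of $i$ by means of $i^{-n}\cdot i^{n-k} = i^{-k}$ and $i^n(-i)^{n-k} = (-1)^{n-k} i^{2n-k} = (-1)^k i^{-k}$ (using $i^{2n} = (-1)^n$), the coefficient of $z^{n-k}$ on the right-hand side of \eqref{self-in.stable.connection.one.more} becomes
\begin{equation*}
a_k \cdot i^{-k} \cdot \frac{(1-i)+(-1)^k(1+i)}{2}.
\end{equation*}

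The remaining step is the finite trigonometric identity
\begin{equation*}
i^{-k}\cdot \frac{(1-i)+(-1)^k(1+i)}{2} \;=\; (-1)^{k(k+1)/2},
\end{equation*}
which I would handle either by inspecting the four residues $k\equiv 0,1,2,3\pmod 4$ (both sides have period four in $k$) or, more conceptually, by setting $\omega = e^{-i\pi/4}$ so that $1\mp i = \sqrt{2}\,\omega^{\pm 1}$ and $i^{-k} = \omega^{2k}$; the left-hand side then collapses to $\sqrt{2}\,\cos((2k+1)\pi/4)$, whose successive values $1,-1,-1,1$ are exactly the sign pattern $(-1)^{k(k+1)/2}$ appearing in~\eqref{main.polynomial.2.stable}. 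Comparing coefficients with $q(z) = \sum_{k=0}^n (-1)^{k(k+1)/2} a_k z^{n-k}$ finishes the argument.

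I anticipate no genuine obstacle: the entire proof is a controlled bookkeeping of powers of $i$, and the only piece with any substance is the period-four sign identity above, which is self-evident once isolated.
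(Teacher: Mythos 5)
Your verification is correct, and it is a genuinely cleaner route than the one in the paper. The paper's proof first derives the representation $q(z)=(-1)^r\bigl[p_0(-z^2)+zp_1(-z^2)\bigr]$ from the even/odd decomposition $p(z)=p_0(z^2)+zp_1(z^2)$ of \eqref{even.odd.parts.even.degree}, then solves the pair of relations \eqref{even.odd.parts} for $(-1)^rp_0(-z^2)$ and $(-1)^rp_1(-z^2)$ in terms of $i^{-n}p(iz)$ and $i^np(-iz)$, and finally has to repeat the argument for odd $n$ because the roles of $p_0$ and $p_1$ and the sign $(-1)^r$ versus $(-1)^{r+1}$ change. Your coefficient-by-coefficient comparison collapses all of this into the single period-four identity $i^{-k}\bigl((1-i)+(-1)^k(1+i)\bigr)/2=(-1)^{k(k+1)/2}$, is uniform in the parity of $n$, and makes transparent that \eqref{self-in.stable.connection.one.more} is a formal identity valid for an arbitrary polynomial, with the hypothesis $p\in\mathbf{SI}_I$ serving only to guarantee (via the duality theorem) that the resulting $q$ is Hurwitz stable --- a point the paper only makes implicitly. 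One caution on bookkeeping: you correctly adopt the normalization of \eqref{main.poly}, in which $a_k$ is the coefficient of $z^{n-k}$; this is the convention actually used in the paper's proof (and the one under which \eqref{self-in.stable.connection.one.more} is true), even though the statement of the duality theorem indexes $p$ as $\sum_k a_kz^k$, so it is worth stating the convention explicitly as you did. With that, the argument is complete and correct.
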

\begin{proof}
Let $n=2r$. Then the polynomial $p(z)$ can be represented a sum of two polynomials
\begin{equation*}
p(z)=p_0(z^2)+zp_1(z^2),
\end{equation*}
where
\begin{equation}\label{even.odd.parts.even.degree}
\begin{array}{l}
p_0(u)=a_0u^r+a_2u^{r-1}+\cdots+a_{2r},\\
\\
p_1(u)=a_1u^{r-1}+a_3u^{r-2}+\cdots+a_{2r-1}.\\
\end{array}
\end{equation}
So the corresponding dual Hurwitz stable polynomial $q(z)$ has the form
\begin{equation}\label{new.Hurwitz.poly}
q(z)=(-1)^r[p_0(-z^2)+zp_1(-z^2)].
\end{equation}

On the other hand, we have
\begin{equation*}
\begin{array}{l}
i^{-n}p(iz)=(-1)^rp(iz)=(-1)^r[p_0(-z^2)+izp_1(-z^2)],\\
\\
(-i)^{-n}p(-iz)=i^{n}p(-iz)=(-1)^rp(-iz)=(-1)^r[p_0(-z^2)-izp_1(-z^2)],
\end{array}
\end{equation*}
that implies
\begin{equation}\label{even.odd.parts}
\begin{array}{l}
(-1)^rp_0(-z^2)=\dfrac{i^{-n}p(iz)+i^np(-iz)}2,\\
\\
(-1)^rzp_1(-z^2)=-i\ \dfrac{i^{-n}p(iz)-i^np(-iz)}{2}.
\end{array}
\end{equation}
The formula~\eqref{self-in.stable.connection.one.more} now follows from~~\eqref{new.Hurwitz.poly} and~\eqref{even.odd.parts}.

The case of $n=2r+1$ can be proved analogously. The only difference we should take into account is that $p_0(u)=a_1u^r+a_3u^{r-1}+\cdots$ and
$p_1(u)=a_0u^r+a_2u^{r-1}+\cdots$, so $q(z)=(-1)^{r+1}[p_0(-z^2)-zp_1(-z^2)]$.
\end{proof}

Clearly, the converse formula is also true
\begin{equation*}
p(z)=i^{-n}q(iz)\dfrac{1-i}2+i^nq(-iz)\dfrac{1+i}2.
\end{equation*}
due to the duality.

Using Theorem~\ref{Theorem.connection.Hurwitz.self-interlacing} and Proposition~\ref{Proposition.stable.SI.connection} one can establish the following curious fact which was noticed by Yu.\,Barkovsky~\cite{Bark_private}.
\begin{theorem}\label{Theorem.SI_connection.second}
Let $p\in\mathbf{SI}_I$ and let $q$ be its dual Hurwitz stable polynomial. Then
\begin{equation*}
p(\lambda)=0\Longleftrightarrow\arg
q(i\lambda)=(-1)^{n-1}\dfrac\pi4\,\,\,\text{or}\,\,\,(-1)^{n-1}\dfrac{5\pi}4;
\end{equation*}
and, respectively, for $\mu\in\mathbb{R}$,
\begin{equation*}
q(\mu)=0\Longleftrightarrow\arg
p(i\mu)=(-1)^{n-1}\dfrac\pi4\,\,\,\text{or}\,\,\,(-1)^{n-1}\dfrac{5\pi}4.
\end{equation*}
In other word, if $p(\lambda)=0$, then $\Re q(i\lambda)=\Im q(i\lambda)$ or $\Re q(i\lambda)=-\Im q(i\lambda)$, and
if $q(\mu)=0$, $\mu\in\mathbb{R}$, then $\Re p(i\mu)=\Im p(i\mu)$ or $\Re p(i\mu)=-\Im p(i\mu)$.
\end{theorem}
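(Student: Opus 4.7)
The plan is to specialize the identity of Proposition~\ref{Proposition.stable.SI.connection} to $z=i\lambda$, obtaining
\begin{equation*}
q(i\lambda)=v_{-}\,p(-\lambda)+v_{+}\,p(\lambda),\qquad v_{-}:=i^{-n}\tfrac{1-i}{2},\quad v_{+}:=i^{n}\tfrac{1+i}{2}.
\end{equation*}
Because $\lambda$ is real and $p$ has real coefficients, both $p(\lambda)$ and $p(-\lambda)$ lie in $\mathbb{R}$, so this expresses $q(i\lambda)$ as a real linear combination of the two fixed nonzero complex numbers $v_-,v_+$.

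Two elementary calculations drive the argument. First, $v_+/v_-=i^{2n}(1+i)/(1-i)=i^{2n+1}\in\{i,-i\}$, so $v_+=\pm i\,v_-$ and the real lines $\mathbb{R}v_-$ and $\mathbb{R}v_+$ are orthogonal in $\mathbb{C}$. Second, a short case check on $n\bmod 4$ shows that $\arg v_{-}=-n\pi/2-\pi/4\equiv(-1)^{n-1}\pi/4\pmod\pi$, so the punctured line $\mathbb{R}v_{-}\setminus\{0\}$ is precisely the set of complex numbers whose argument equals $(-1)^{n-1}\pi/4$ or $(-1)^{n-1}5\pi/4$.

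With these in hand, the equivalence is immediate. Since $q$ is Hurwitz stable, $q$ has no purely imaginary zeros, so $q(i\lambda)\neq 0$ and $\arg q(i\lambda)$ is well-defined. If $p(\lambda)=0$, then $q(i\lambda)=p(-\lambda)\,v_{-}$; the strict interlacing of the zeros of $p(z)$ and $p(-z)$, which holds for any $p\in\mathbf{SI}_{I}$, forces $p(-\lambda)\neq 0$, so $q(i\lambda)\in\mathbb{R}v_{-}\setminus\{0\}$ and the desired argument condition holds. Conversely, if $q(i\lambda)\in\mathbb{R}v_{-}$, then by uniqueness of the decomposition along the orthogonal real frame $(v_{-},v_{+})$ the $v_{+}$-component $p(\lambda)\,v_{+}$ must vanish, which, since $v_{+}\neq 0$, gives $p(\lambda)=0$. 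The second statement is obtained identically from the dual formula $p(z)=i^{-n}q(iz)(1-i)/2+i^{n}q(-iz)(1+i)/2$ evaluated at $z=i\mu$; one needs $p(i\mu)\neq 0$ for $\mu\in\mathbb{R}$, which holds because the zeros of $p$ are real and $a_n=p(0)\neq 0$ by Theorem~\ref{Th.Stodola.necessary.condition.self-interlacing}.

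I do not foresee a serious obstacle: everything is packaged in Proposition~\ref{Proposition.stable.SI.connection}, and the only point requiring care is the sign bookkeeping when verifying $\arg v_{-}\equiv(-1)^{n-1}\pi/4\pmod\pi$ through the four residue classes of $n$ modulo~$4$.
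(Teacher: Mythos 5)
Your proof is correct, and it ultimately rests on the same identity the paper uses: Proposition~\ref{Proposition.stable.SI.connection}, which is itself just the even--odd decomposition $p(z)=p_0(z^2)+zp_1(z^2)$, $q(z)=(-1)^{l}\bigl[p_0(-z^2)+(-1)^nzp_1(-z^2)\bigr]$ in disguise. The difference is one of packaging. The paper computes $q(i\lambda)=(-1)^r\bigl[p_0(\lambda^2)+i\lambda p_1(\lambda^2)\bigr]$ and reads the condition $p(\lambda)=0$ as $\tan\bigl(\arg q(i\lambda)\bigr)=-1$, treating the two parities of $n$ separately; you instead evaluate the Proposition at $z=i\lambda$ and decompose $q(i\lambda)=p(-\lambda)v_-+p(\lambda)v_+$ along the orthogonal real frame $(v_-,v_+)$, which is the same datum viewed in a basis rotated by $\pi/4$. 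What your version buys is a uniform treatment of even and odd $n$, an explicit converse direction (vanishing of the $v_+$-component forces $p(\lambda)=0$, with no division by $p_0(\lambda^2)$ and hence no worry about that quantity vanishing), and an explicit verification that $q(i\lambda)\neq0$ and $p(i\mu)\neq0$ so that the arguments are well defined --- points the paper's proof passes over silently. Your sign bookkeeping $\arg v_-\equiv(-1)^{n-1}\pi/4\pmod{\pi}$ checks out in all four residue classes of $n$ modulo $4$, and the dual formula $p(z)=i^{-n}q(iz)\tfrac{1-i}{2}+i^{n}q(-iz)\tfrac{1+i}{2}$ that you invoke for the second equivalence is indeed stated (and easily verified) in the paper.
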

\begin{proof}
Let first the degree of $p$ be even: $n=2r$. Then $p(\lambda)=0$ if and only if $\dfrac{\lambda p_1(\lambda^2)}{p_0(\lambda^2)}=-1$,
where $p_0$ and $p_1$ are defined in~\eqref{even.odd.parts.even.degree}. From~\eqref{new.Hurwitz.poly} we obtain
$q(i\lambda)=(-1)^r[p_0(\lambda^2)+i\lambda p_1(\lambda^2)]$.
%
%
Consequently, $\arg q(i\lambda)=\arctan\left(\dfrac{\lambda
p_1(\lambda^2)}{p_0(\lambda^2)}\right)=\arctan(-1)=-\dfrac\pi4\,\text{or}\,\dfrac{3\pi}4$.

The case $n=2r+1$ can be proved analogously with the difference that $\arg q(i\lambda)=\arctan(1)$ (see the proof of Proposition~\ref{Proposition.stable.SI.connection}). The second assertion of the theorem follows from the first one, since the polynomials $p(z)$ and $q(z)$ are dual.
\end{proof}
\noindent Theorem~\ref{Theorem.SI_connection.second} can be generalized for arbitrary polynomials, see Appendix for details.


Finally, let us note that from the proofs of Theorem~\ref{Theorem.connection.Hurwitz.self-interlacing} and Proposition~\ref{Proposition.stable.SI.connection}
one can define a self-interlacing polynomial in terms of roots of its odd and even parts. So the following fact is true.
\begin{prepos}
A real polynomial $p(z)=p_0(z^2)+zp_1(z^2)$ is self-interlacing if and only if the polynomials $p_0(u)$ and $p_1(u)$ have positive roots,
and the roots of one polynomial interlace the roots of the other one.
\end{prepos}
%

\setcounter{equation}{0}
\section{Properties of self-interlacing polynomials}\label{section:properties}


Hurwitz stable polynomials have an interpretation in terms of Stieltjes
continued fractions~\cite[Chapter~XV, \S 14, Theorem 16]{Gantmakher}.
Due to the relation between Hurwitz stable polynomials and self-interlacing
polynomials provided by Theorem~\ref{Theorem.connection.Hurwitz.self-interlacing},
it is possible to associate with self-interlacing polynomials certain continued
fractions of Stieltjes type considered e.g. in~\cite[Section 3.4]{Holtz_Tyaglov}.

The following theorem presents a relation between self-interlacing
polynomials and continued fractions of Stieltjes type.
\begin{theorem}\label{Theorem.self-interlacing.Stieltjes.cont.frac.criteria}
The polynomial $p$ of degree $n$ belongs to the class $\mathbf{SI}_I$ if and only if its associated
function~$\Phi(u)$ defined in~\eqref{assoc.function} has the following Stieltjes continued fraction
expansion:
\begin{equation}\label{Stieltjes.fraction.for.self-interlacing}
\Phi(u)=\dfrac1{c_1u+\cfrac1{c_2+\cfrac1{c_{3}u+\cfrac1{\ddots+\cfrac1{c_{2r-1}u+\cfrac1{c_{2r}+\cfrac1{c_{2r+1}u}}}}}}},\quad\text{with}\quad
(-1)^{i}c_i>0,\quad i=1,\ldots,2r,
\end{equation}
where $c_{2r+1}=\infty$ if $n$ is even, and $c_{2r+1}<0$ if $n$ is odd. The number $r$ is
as in~\eqref{floor.poly.degree.2}.
\end{theorem}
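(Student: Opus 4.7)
The plan is to deduce this characterization from the classical Stieltjes continued fraction theorem for Hurwitz stable polynomials, transported along the duality of Theorem~\ref{Theorem.connection.Hurwitz.self-interlacing}. Given $p\in\mathbf{SI}_I$, I would introduce its dual Hurwitz stable polynomial $q$ as in~\eqref{main.polynomial.2.stable} and the auxiliary function $\Psi(u):=\Phi(-u)$. As already observed in the proof of Theorem~\ref{Theorem.connection.Hurwitz.self-interlacing}, $\Psi(u)$ is precisely the function~\eqref{assoc.function} associated with $q$.

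Next I would invoke the classical expansion theorem~\cite[Ch.~XV, \S14, Theorem~16]{Gantmakher} (see also~\cite[Section~3.4]{Holtz_Tyaglov}): $q$ is Hurwitz stable if, and only if, $\Psi(u)$ admits a terminating Stieltjes continued fraction
\[
\Psi(u)=\cfrac{1}{\tilde c_1 u+\cfrac{1}{\tilde c_2+\cfrac{1}{\tilde c_3 u+\cfrac{1}{\ddots+\cfrac{1}{\tilde c_{2r-1}u+\cfrac{1}{\tilde c_{2r}+\cfrac{1}{\tilde c_{2r+1}u}}}}}}}
\]
with all $\tilde c_i>0$, where the innermost tail $\tilde c_{2r+1}u$ is absent (equivalently $\tilde c_{2r+1}=\infty$) when $n=2r$ is even and present with $\tilde c_{2r+1}>0$ when $n=2r+1$ is odd, consistently with the fact that $\Psi$, like $\Phi$, has a pole at the origin exactly when $n$ is odd.

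Substituting $u\mapsto -u$ converts this into a continued fraction for $\Phi(u)=\Psi(-u)$. A short induction on the depth of the fraction shows that the substitution flips the sign of each partial quotient of the form $\tilde c_{2i-1}u$ while leaving each constant partial quotient $\tilde c_{2i}$ unchanged. Setting $c_{2i-1}:=-\tilde c_{2i-1}$ and $c_{2i}:=\tilde c_{2i}$ therefore yields precisely~\eqref{Stieltjes.fraction.for.self-interlacing}, the sign pattern $(-1)^{i}c_i>0$, and the claimed terminal behavior $c_{2r+1}=\infty$ for even $n$ and $c_{2r+1}=-\tilde c_{2r+1}<0$ for odd $n$. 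The converse direction is obtained by reversing this chain: an expansion of the form~\eqref{Stieltjes.fraction.for.self-interlacing} produces under $u\mapsto -u$ a standard Stieltjes expansion of $\Psi$ with positive coefficients, forcing $q$ to be Hurwitz stable and hence $p\in\mathbf{SI}_I$ by Theorem~\ref{Theorem.connection.Hurwitz.self-interlacing}.

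The main obstacle I expect is the parity bookkeeping. One must track carefully that the number of partial quotients is exactly $2r$ for $n=2r$ and $2r+1$ for $n=2r+1$, matching the total number of poles of $\Phi$ (equivalently $\Psi$), including the pole at the origin in the odd case, and verify that the sign pattern $(-1)^{i}$ in~\eqref{Stieltjes.fraction.for.self-interlacing} emerges with the correct offset from the substitution. A secondary subtlety is the rigidity of the expansion under $u\mapsto-u$: this follows from uniqueness of the terminating Stieltjes continued fraction of a rational function, but deserves an explicit remark in the write-up.
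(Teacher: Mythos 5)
Your proof is correct, but it follows a genuinely different route from the paper's. The paper's own argument is a two-line reduction: by Theorem~\ref{Theorem.main.self-interlacing}, $p\in\mathbf{SI}_I$ if and only if $-\Phi(u)$ maps the upper half-plane to the lower half-plane and has only positive (resp.\ nonnegative) poles, and the expansion~\eqref{Stieltjes.fraction.for.self-interlacing} together with its sign pattern is then read off from the continued-fraction characterization of such functions in \cite[Theorem~3.8, Corollaries~3.39 and 3.40]{Holtz_Tyaglov}, the signs of the $c_i$ being confirmed by the determinant formula~\eqref{cont.frac.coeff}. You instead transport the classical Stieltjes expansion theorem for Hurwitz stable polynomials along the duality $p\leftrightarrow q$ of Theorem~\ref{Theorem.connection.Hurwitz.self-interlacing}, using $\Psi(u)=\Phi(-u)$ and the substitution $u\mapsto-u$; this is legitimate, since the identification of $\Psi$ as the associated function of $q$ is established in the paper's proof of Theorem~\ref{Theorem.connection.Hurwitz.self-interlacing}, the substitution visibly negates exactly the odd-indexed partial quotients $\tilde c_{2i-1}u$, and no uniqueness of the expansion is actually needed in either direction (each direction simply produces an expansion and hands it to the corresponding classical equivalence, so you can drop that worry from your write-up). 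What your route buys is that the alternating pattern $(-1)^ic_i>0$ becomes completely transparent as the footprint of $u\mapsto-u$, and the statement is anchored to the classical reference \cite[Ch.~XV, \S 14, Theorem~16]{Gantmakher} rather than to the survey; the cost is that it rests on Theorem~\ref{Theorem.connection.Hurwitz.self-interlacing} (itself derived from Theorem~\ref{Theorem.main.self-interlacing}) and that you must match Gantmacher's normalization, which expands the reciprocal even-over-odd ratio rather than $\Psi$ itself, so the quoted form has to be inverted. Your parity bookkeeping ($2r$ partial quotients for $n=2r$, $2r+1$ for $n=2r+1$, the last one accounting for the pole of $\Phi$ at the origin) is consistent with the paper's conventions.
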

\begin{proof}
In fact, by Theorem~\ref{Theorem.main.self-interlacing}, $p\in\mathbf{SI}_I$ if and only if the
function $-\Phi (u)$ maps the upper half-plane to the lower half-plane. Now the~assertion of the
theorem follows from Theorem~3.8 and Corollaries~3.39 and~3.40 of the work~\cite{Holtz_Tyaglov}.
\end{proof}

According to~\cite{Holtz_Tyaglov}, the coefficients $c_i$ can be found by the following formul\ae
\begin{equation}\label{cont.frac.coeff}
c_i=\dfrac{\Delta_{i-1}^2(p)}{\Delta_{i-2}(p)\Delta_i(p)},\qquad i=1,\cdots,n.
\end{equation}
This formul\ae\ follow from~\eqref{Hurwitz.formula.1}--\eqref{Hurwitz.formula.2} and from formul\ae~(1.113)--(1.114) of the work~\cite{Holtz_Tyaglov}. The signs of $c_i$ in Theorem~\ref{Theorem.self-interlacing.Stieltjes.cont.frac.criteria}
follow from~\eqref{cont.frac.coeff}.

Using Theorem~\ref{Theorem.main.self-interlacing} one can easily obtain the following fact.
\begin{theorem}\label{Theorem.SI.with.differentiation}
Let $p\in\mathbf{SI}_I$ and $\deg p\geqslant2$.
Then the polynomial
\begin{equation*}\label{Theorem.SI.with.differentiation.condition}
p_j(z)=\sum\limits_{i=0}^{n-2j}\left[\dfrac{n-i}2\right]\left(\left[\dfrac{n-i}2\right]-1\right)\cdots\left(\left[\dfrac{n-i}2\right]+j-1\right)a_iz^{n-2j-i},\quad
j=1,\ldots,r-1.
\end{equation*}
belongs to the class $\mathbf{SI}_I$. Here $r$ is defined in~\eqref{floor.poly.degree.2}.
\end{theorem}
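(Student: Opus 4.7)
The plan is to interpret the polynomial $p_j$ as the result of differentiating $j$ times the pair of polynomials that produce $\Phi_p$ via~\eqref{assoc.function}, and then to push the argument through Theorem~\ref{Theorem.main.self-interlacing} together with the Hermite--Kakeya--Obreshkoff theorem and Rolle's theorem. Specifically, I would write $p(z) = P_0(z^2) + zQ_1(z^2)$ when $n = 2r$ and $p(z) = zP_0(z^2) + Q_1(z^2)$ when $n = 2r+1$, where $P_0, Q_1$ collect the even- and odd-indexed coefficients of $p$ regarded as polynomials in $u = z^2$. A direct coefficient comparison (reading the product in the statement as a falling factorial) then identifies $p_j$ as $p_j(z) = P_0^{(j)}(z^2) + zQ_1^{(j)}(z^2)$ in the even case and $p_j(z) = zP_0^{(j)}(z^2) + Q_1^{(j)}(z^2)$ in the odd case, so that
$$\Phi_{p_j}(u) = \frac{Q_1^{(j)}(u)}{P_0^{(j)}(u)} \quad (n = 2r), \qquad \Phi_{p_j}(u) = \frac{Q_1^{(j)}(u)}{u\,P_0^{(j)}(u)} \quad (n = 2r+1).$$
Since this identification gives $p_{j+1} = (p_j)_1$, it suffices to treat $j = 1$ and iterate.

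For $j = 1$, Theorem~\ref{Theorem.main.self-interlacing} reduces the task to showing that $\Phi_{p_1}$ maps the upper half-plane into itself and has only positive (respectively nonnegative, for odd $n$) simple poles. From the same theorem applied to $p$, the polynomials $P_0$ and $Q_1$ have real, simple, positive, strictly interlacing roots. By the Hermite--Kakeya--Obreshkoff theorem this interlacing is equivalent to the statement that every real linear combination $\alpha P_0(u) + \beta Q_1(u)$ has only real roots. Applying Rolle's theorem to each such combination, $\alpha P_0'(u) + \beta Q_1'(u) = (\alpha P_0 + \beta Q_1)'(u)$ is again real-rooted for every $(\alpha,\beta) \in \mathbb{R}^2 \setminus \{0\}$; the converse direction of Hermite--Kakeya--Obreshkoff then implies $P_0'$ and $Q_1'$ also have real simple strictly interlacing roots. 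Positivity of the new roots is automatic from Rolle, and since the leading coefficients of $P_0'$ and $Q_1'$ differ from those of $P_0, Q_1$ only by positive integer factors, the sign pattern demanded by Theorem~\ref{Th.Stodola.necessary.condition.self-interlacing} is preserved, so the residues of $\Phi_{p_1}$ carry the correct signs.

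The main technical wrinkle is the odd case $n = 2r+1$, where $\deg P_0 = \deg Q_1 = r$ and the pencil $\alpha P_0 + \beta Q_1$ drops in degree precisely at the direction $[\alpha:\beta]$ where the leading coefficients of $P_0$ and $Q_1$ cancel. At that single exceptional direction one must argue by continuity (or projectively on the pencil) that Rolle's theorem still supplies the requisite number of real roots of $\alpha P_0' + \beta Q_1'$ so that the converse of Hermite--Kakeya--Obreshkoff applies uniformly. All other parts of the argument are routine.
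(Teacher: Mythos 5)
Your proposal follows essentially the same route as the paper: decompose $p(z)$ into its even and odd parts in $u=z^2$, identify $p_j$ with the polynomial assembled from the $j$-th derivatives of those parts, and conclude via Theorem~\ref{Theorem.main.self-interlacing} after checking that the poles stay positive (nonnegative for odd $n$) and that the leading-coefficient signs keep $\Phi_{p_j}$ mapping the upper half-plane to itself. The only difference is that where you derive the preservation of (strict) interlacing under differentiation from the Hermite--Kakeya--Obreshkoff theorem combined with Rolle's theorem, the paper simply cites V.\,A.~Markov's theorem, of which your argument is the standard proof.
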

\begin{proof}
Let $n=2r$. By Theorems~\ref{Theorem.main.self-interlacing}, if $p\in\mathbf{SI}_I$, then the associated function $-\Phi(u)=-\dfrac{p_1(u)}{p_0(u)}$, where
$q_0(u)$ and $q_1(u)$ are defined in~\eqref{even.odd.parts.even.degree}, maps the upper half-plane to the lower half-plane, and has only positive poles. This is equivalent
to the fact (see~\cite[Theorem~3.4]{Holtz_Tyaglov}) that $q_0(u)$ and $q_1(u)$ have simple, negative, and interlacing roots, and $-\Phi(u)$ is decreasing
between its poles.

By V.A.\,Markov theorem~\cite[Chapter~1, Theorem~9]{ChebotarevMeiman} (see also~\cite[Theorem~3.8]{Holtz_Tyaglov}) if two polynomials have real, simple, and interlacing roots, then their derivatives
also have real, simple, and interlacing roots. Thus, for every $j=1,\ldots,r-1$, the roots of the derivatives of order $j$ of the polynomials $q_0(u)$ and $q_1(u)$
also have simple, positive, and interlacing roots. Moreover, since the leading coefficients of $q_0^{(j)}(u)$ and $q_1^{(j)}(u)$ are always of different sings,
the functions $-\dfrac{q_1^{(j)}(u)}{q_0^{(j)}(u)}$ are negative for sufficiently large positive $u$. Therefore, by Theorem~\ref{Theorem.main.self-interlacing}
the polynomials $p_j(z)=q_0^{(j)}(z^2)+zq_1^{(j)}(z^2)$ belong to the class $\mathbf{SI}_I$ for all $j=1,\ldots,r-1$.

The case $n=2r+1$ can be proved analogously.
\end{proof}

Using V.A.\,Markov's theorem it is also easy to prove the following fact.
\begin{theorem}
If $p(z)\in\mathbf{SI}_I$, then $p^{(k)}(z)\in\mathbf{SI}_I$, $k=1,\ldots,n-1$, where $p^{(k)}(z)$ is the $k^{\mathrm{th}}$ derivative of~$p(z)$.
\end{theorem}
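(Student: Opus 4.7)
The plan is to induct on $k$, reducing everything to the base case: $p' \in \mathbf{SI}_I$ whenever $p \in \mathbf{SI}_I$ with $\deg p \geq 2$. The conclusion $p^{(k)} \in \mathbf{SI}_I$ for $k = 1, \ldots, n-1$ then follows by iterating this single step.

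First I would apply V.A.\,Markov's theorem---the same result invoked in the proof of Theorem~\ref{Theorem.SI.with.differentiation}---to the pair $\bigl(p(z),\,p(-z)\bigr)$. By the definition of $\mathbf{SI}_I$ these two polynomials have real, simple, and interlacing zeros, and the strict ordering of absolute values in~\eqref{roots.type.I} rules out any common zero. V.A.\,Markov's theorem then gives that $p'(z)$ and $\tfrac{d}{dz}p(-z) = -p'(-z)$ still have real, simple, interlacing zeros. Passing from $-p'(-z)$ to $p'(-z)$ does not change zero sets, so the zeros of $p'(z)$ interlace with their own negations; equivalently, $p' \in \mathbf{SI}_I \cup \mathbf{SI}_{II}$.

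The remaining task is to exclude $\mathbf{SI}_{II}$, and here my plan is to use Theorem~\ref{Th.Stodola.necessary.condition.self-interlacing}. The hypothesis $p \in \mathbf{SI}_I$ forces $(-1)^{k(k+1)/2}a_k > 0$ for $k = 0, \ldots, n$. Since the coefficient of $z^{n-1-k}$ in $p'(z)$ is $(n-k)a_k$ and $n-k > 0$, the coefficients of $p'$ obey exactly the same Stodola sign pattern as those of $p$, i.e.\ the pattern required for $\mathbf{SI}_I$ in degree $n-1$. Running Theorem~\ref{Th.Stodola.necessary.condition.self-interlacing} through $p'(-z) \in \mathbf{SI}_I$ yields the competing sign pattern prescribed by $p' \in \mathbf{SI}_{II}$; a direct comparison shows that this second pattern demands the opposite strict sign on the leading coefficient when $n$ is even and on the sub-leading coefficient when $n$ is odd. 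The two patterns are therefore incompatible, so $p' \notin \mathbf{SI}_{II}$, and combining with the V.A.\,Markov step yields $p' \in \mathbf{SI}_I$.

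The main obstacle I anticipate is precisely this dichotomy between $\mathbf{SI}_I$ and $\mathbf{SI}_{II}$: V.A.\,Markov's theorem is indifferent to the kind, so an extra sign-preserving invariant is needed. Theorem~\ref{Th.Stodola.necessary.condition.self-interlacing} is ideally suited for this because differentiation scales each coefficient by a positive integer and therefore preserves the Stodola pattern intact. Once the base case is secured, the induction on $k$ propagates automatically up to $k = n-1$, where $p^{(n-1)}$ is linear with the required positive root.
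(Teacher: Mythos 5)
Your proof is correct, but it settles the crucial point --- that the derivative is of kind $I$ rather than kind $II$ --- by a genuinely different route from the paper. Both arguments begin the same way: V.A.\,Markov's theorem applied to the strictly interlacing pair $\bigl(p(z),\,p(-z)\bigr)$ shows that $p^{(k)}(z)$ and $p^{(k)}(-z)$ again have real, simple, interlacing zeros, whence $p^{(k)}\in\mathbf{SI}_I\cup\mathbf{SI}_{II}$. The paper then decides the kind by comparing largest roots, invoking \cite[Corollary~3.7]{Holtz_Tyaglov} for the fact that the largest zero of $p^{(k)}(z)$ exceeds that of $p^{(k)}(-z)$, which is precisely the defining inequality \eqref{roots.type.I}. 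You instead exclude $\mathbf{SI}_{II}$ by a coefficient-sign argument: since differentiation multiplies the coefficient of $z^{n-j}$ by the positive integer $n-j$, the Stodola pattern $(-1)^{j(j+1)/2}$ of Theorem~\ref{Th.Stodola.necessary.condition.self-interlacing} (indexed from the leading coefficient downward) survives differentiation, whereas the necessary pattern for a kind-$II$ polynomial with positive leading coefficient, obtained by applying the same theorem to $p'(-z)$ after normalizing, is $(-1)^{j(j-1)/2}$; the two patterns disagree at every odd $j$. This is a legitimate and arguably more self-contained argument, as it relies only on statements proved or quoted inside the paper rather than on an external corollary about largest roots. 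One small correction: the decisive disagreement sits at the sub-leading coefficient ($j=1$) for \emph{both} parities of $n$; your claim that for even $n$ the contradiction appears in the leading coefficient is a normalization slip, since both classes are invariant under multiplication by $-1$ and so the leading coefficient alone can never separate them. The sub-leading coefficient of $p^{(k)}$ is a positive multiple of $a_1$, which is nonzero by Theorem~\ref{Th.Stodola.necessary.condition.self-interlacing} applied to $p$ itself, so it does the job in every case and your induction goes through.
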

\begin{proof}
The self-interlacing polynomials $p(z)$ and $p(-z)$ have real, simple, and interlacing roots by definition. For any $k=1,\ldots,n-1$, the polynomials $p^{(k)}(z)$ and $p^{(k)}(-z)$ also have real, simple, and interlacing roots according to V.A.\,Markov's theorem. Moreover,
the largest root of $p^{(k)}(z)$ is greater than the largest root of $p^{(k)}(-z)$ (see~\cite[Corollary~3.7]{Holtz_Tyaglov} for detailed proof
of this fact), so $p^{(k)}(z)\in\mathbf{SI}_I$ for
all $k=1,\ldots,n-1$.
\end{proof}
%


Now we are in a position to study minors of the Hurwitz matrix of self-interlacing polynomials. Recall that
the Hurwitz matrix of a polynomial $p(z)$ defined as in~\eqref{main.polynomial} has the form
\begin{equation}\label{HurwitzMatrix}
\mathcal{H}_n(p)=
\begin{pmatrix}
a_1&a_3&a_5&a_7&\dots&0&0\\
a_0&a_2&a_4&a_6&\dots&0&0\\
0  &a_1&a_3&a_5&\dots&0&0\\
0  &a_0&a_2&a_4&\dots&0&0\\
\vdots&\vdots&\vdots&\vdots&\ddots&\vdots&\vdots\\
0  &0  &0  &0  &\dots&a_{n-1} &0\\
0  &0  &0  &0  &\dots&a_{n-2} &a_n
\end{pmatrix}.
\end{equation}
The Hurwitz minors defined in~\eqref{Hurwitz.minors} are the leading principal minors of $\mathcal{H}_n(p)$.

Let $p\in\mathbf{SI}_I$, and let $q$ be its dual Hurwitz stable polynomial. From the proof of Proposition~\ref{Proposition.stable.SI.connection} it follows that if $p(z)=p_0(z^2)+zp_1(z^2)$, where $p_0(u)$ and $p_1(u)$ are the odd end even parts of $p$, then
$q(z)=(-1)^{l}[p_0(-z^2)+(-1)^nzp_1(-z^2)]$, where $l$ is defined in~\eqref{floor.poly.degree}. The
Hurwitz matrix of the polynomial~$q$ has the form
\begin{equation*}
\mathcal{H}_n(q)=
\begin{pmatrix}
-a_1& a_3&-a_5& a_7&\dots&0\\
 a_0&-a_2& a_4&-a_6&\dots&0\\
0  &-a_1& a_3&-a_5&\dots&0\\
0  & a_0&-a_2& a_4&\dots&0\\
\dots&\dots&\dots&\dots&\dots&\dots\\
0&0&0&0&\dots&(-1)^{\tfrac{n(n+1)}2}a_n\\
\end{pmatrix}.
\end{equation*}
It is easy to see that the matrix $\mathcal{H}_n(q)$ can be
factorized as follows
\begin{equation}\label{SI_Stab.connection.7}
\mathcal{H}_n(q)=C_n\mathcal{H}_n(p)E_n,
\end{equation}
where the $n\times n$ matrices $C_n$ and
$E_n$ have the forms
\begin{equation*}
C_n=
\begin{pmatrix}
    1 &0  &0  & 0 & 0 &\dots\\
    0 & -1 &0  & 0 & 0 &\dots\\
    0 &0  &-1 & 0 & 0 &\dots\\
    0 &0  &0  &1 & 0 &\dots\\
    0 &0  &0  & 0 & 1 &\dots\\
    \vdots&\vdots&\vdots&\vdots&\vdots&\ddots
\end{pmatrix},
\qquad
E_n=
\begin{pmatrix}
    -1 &0  &0  & 0 & 0 &\dots\\
    0 & 1 &0  & 0 & 0 &\dots\\
    0 &0  &-1 & 0 & 0 &\dots\\
    0 &0  &0  &1 & 0 &\dots\\
    0 &0  &0  & 0 & -1 &\dots\\
    \vdots&\vdots&\vdots&\vdots&\vdots&\ddots
\end{pmatrix}.
\end{equation*}
All the non-principal minors of these matrices  equal zero. The principal minors of these
matrices can be easily calculated:
\begin{equation}\label{Matrix.Unique.1.minors}
C_n
\begin{pmatrix}
    i_1 &i_2 &\dots &i_m\\
    i_1 &i_2 &\dots &i_m
\end{pmatrix}=
(-1)^{\sum\limits_{k=1}^m\tfrac{i_k(i_k-1)}2},
\qquad
E_n\begin{pmatrix}
    i_1 &i_2 &\dots &i_m\\
    i_1 &i_2 &\dots &i_m\\
\end{pmatrix}
=(-1)^{\sum\limits_{k=1}^{m}i_k},
\end{equation}
where $1\leqslant i_1<i_2<\ldots<i_m\leqslant n$. Thus, the
Cauchy--Binet formula together with~\eqref{SI_Stab.connection.7}
and~\eqref{Matrix.Unique.1.minors} implies

\begin{equation}\label{SI_Stab.connection.6}
\mathcal{H}_n(q)
\begin{pmatrix}
    i_1 &i_2 &\dots &i_m\\
    j_1 &j_2 &\dots &j_m
\end{pmatrix}=(-1)^{\sum\limits_{k=1}^m\tfrac{i_k(i_k-1)}2+\sum\limits_{k=1}^mj_k}
\mathcal{H}_n(p)
\begin{pmatrix}
    i_1 &i_2 &\dots &i_m\\
    j_1 &j_2 &\dots &j_m
\end{pmatrix},
\end{equation}
where $1\leqslant
\begin{array}{c}
i_1<i_2<\ldots<i_m\\
j_1<j_2<\ldots<j_m
\end{array}
\leqslant n$.

\vspace{2mm}

Thus, we established that the absolute values of the corresponding minors of the Hurwitz matrices of the
dual polynomials $p$ and $q$ are equal as we announced in Section~\ref{section:connection.Hurwitz.SI}.

\begin{remark}
Note that if we know in advance formul\ae~\eqref{main.poly}--\eqref{main.poly.q} (see also~\eqref{main.polynomial.2}--\eqref{main.polynomial.2.stable}),
then the formula~\ref{SI_Stab.connection.7} implies Theorems~\ref{Theorem.self-interlacing.Hurwitz.criterion}--\ref{Theorem.Lienard.Chipart.intro}.
However, our proof of Theorem~\ref{Theorem.connection.Hurwitz.self-interlacing} is based on some properties of $R$-functions that immediately
imply Theorems~\ref{Theorem.self-interlacing.Hurwitz.criterion}--\ref{Theorem.Lienard.Chipart.intro} without addressing to Hurwitz matrices.
\end{remark}
\begin{remark}
It is clear that the formula~\eqref{SI_Stab.connection.6} is true for two arbitrary complex polynomials $p$ and $q$ related as in~\eqref{main.polynomial.2}--\eqref{main.polynomial.2.stable}.
\end{remark}

The polynomial $q$ is stable by assumption. Consequently, by Asner's theorem~\cite{Asner} (see also~\cite{Holtz_st,Holtz_Tyaglov,Dyachenko,Kemperman}), the matrix $\mathcal{H}_n(q)$
is totally nonnegative, that is, any its minor is nonnegative. From this fact and the formula~\eqref{SI_Stab.connection.6} we obtain the following theorem.
\begin{theorem}
Let a polynomial $p$ be defined in~\eqref{main.polynomial.2}, and let $\mathcal{H}_n(p)$ be its Hurwitz matrix defined
in~\eqref{HurwitzMatrix}. If $p\in\mathbf{SI}_I$, then
%
%
%
%
%
\begin{equation*}
(-1)^{\sum\limits_{k=1}^m\tfrac{i_k(i_k-1)}2+\sum\limits_{k=1}^mj_k}
\mathcal{H}_n(p)
\begin{pmatrix}
    i_1 &i_2 &\dots &i_m\\
    j_1 &j_2 &\dots &j_m
\end{pmatrix}\geqslant0,
\end{equation*}
%
%
where  $1\leqslant
\begin{array}{c}
i_1<i_2<\ldots<i_m\\
j_1<j_2<\ldots<j_m
\end{array}
\leqslant n$. Moreover, the absolute values of the corresponding minors of the matrices $\mathcal{H}_n(p)$ and $\mathcal{H}_n(q)$, where $q$ is the polynomials
dual to $p$ as in~\eqref{main.polynomial.2.stable}, are equal.
\end{theorem}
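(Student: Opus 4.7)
The plan is to combine the minor identity~\eqref{SI_Stab.connection.6}, already derived just above from the factorization~\eqref{SI_Stab.connection.7} via Cauchy--Binet, with Asner's total nonnegativity theorem applied to the Hurwitz matrix of the dual polynomial. Essentially all of the substantive work has been done in the paragraphs preceding the statement; the proof is a short two-step application of previously established results.

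First I would invoke Theorem~\ref{Theorem.connection.Hurwitz.self-interlacing}: the hypothesis $p\in\mathbf{SI}_I$ is precisely the condition for the dual polynomial $q$ of~\eqref{main.polynomial.2.stable} to be Hurwitz stable. Asner's theorem~\cite{Asner} then asserts that the Hurwitz matrix $\mathcal{H}_n(q)$ of a real Hurwitz stable polynomial is totally nonnegative, so that
$$
\mathcal{H}_n(q)\begin{pmatrix} i_1 & i_2 & \dots & i_m\\ j_1 & j_2 & \dots & j_m \end{pmatrix}\geqslant 0
$$
for every pair of increasing index tuples $1\leqslant i_1<\cdots<i_m\leqslant n$ and $1\leqslant j_1<\cdots<j_m\leqslant n$.

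Second, I would substitute this nonnegativity directly into the identity~\eqref{SI_Stab.connection.6}. Since the prefactor $(-1)^{\sum_k i_k(i_k-1)/2+\sum_k j_k}$ is $\pm 1$, multiplying through by it preserves the inequality and yields
$$
(-1)^{\sum_{k=1}^{m}\tfrac{i_k(i_k-1)}{2}+\sum_{k=1}^{m}j_k}\mathcal{H}_n(p)\begin{pmatrix} i_1 & i_2 & \dots & i_m\\ j_1 & j_2 & \dots & j_m \end{pmatrix}=\mathcal{H}_n(q)\begin{pmatrix} i_1 & i_2 & \dots & i_m\\ j_1 & j_2 & \dots & j_m \end{pmatrix}\geqslant 0,
$$
which is exactly the asserted signed total nonnegativity. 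Taking absolute values in~\eqref{SI_Stab.connection.6} and using $|{\pm 1}|=1$ immediately yields $|\mathcal{H}_n(p)(I;J)|=|\mathcal{H}_n(q)(I;J)|$ for all corresponding index selections, giving the second assertion.

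There is no real obstacle remaining, since the delicate computations—the factorization $\mathcal{H}_n(q)=C_n\mathcal{H}_n(p)E_n$, the evaluation of the principal minors of the diagonal sign matrices $C_n$ and $E_n$ in~\eqref{Matrix.Unique.1.minors}, and the Cauchy--Binet expansion producing~\eqref{SI_Stab.connection.6}—have all been carried out already. The only conceptual ingredient beyond bookkeeping is the appeal to Asner's theorem, legitimized by the self-interlacing/stable duality of Theorem~\ref{Theorem.connection.Hurwitz.self-interlacing}; once that is invoked, both assertions of the theorem follow by reading off~\eqref{SI_Stab.connection.6}.
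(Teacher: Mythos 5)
Your proposal is correct and follows exactly the paper's own argument: the paper likewise invokes Theorem~\ref{Theorem.connection.Hurwitz.self-interlacing} to get stability of the dual polynomial $q$, applies Asner's theorem to conclude total nonnegativity of $\mathcal{H}_n(q)$, and reads off both assertions from the already-established identity~\eqref{SI_Stab.connection.6}. There is nothing to add.
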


\setcounter{equation}{0}

\section{The second proof of the Hurwitz self-interlacing
criterion. Stability criterion.}\label{section:Second.proof.of.SI.criterion}

In this section, we provide another approach to proving Theorem~\ref{Theorem.self-interlacing.Hurwitz.criterion}.
This approach is based on Hankel minors related to the Laurent series at infinity of the function $R(z)$ defined in~\eqref{auxiliary.rational.function} instead of
the more standard function~$\Phi(u)$ defined in~\eqref{assoc.function}.

Let again
\begin{equation}\label{SI.second.proof.poly}
p(z)=a_0z^n+a_1z^{n-1}+\dots+a_n,\qquad a_1,\dots,a_n\in\mathbb
R,\ a_0>0,
\end{equation}
be a real polynomial. Consider the function $R(z)$ defined in~\eqref{auxiliary.rational.function}:
\begin{equation}\label{Rat.func.2}
R(z)=\dfrac{(-1)^np(-z)}{p(z)},
\end{equation}
and expand it into its Laurent series at $\infty$:
\begin{equation*}
R(z)=1+\frac{s_0}z+\frac{s_1}{z^2}+\frac{s_2}{z^3}+\dots,
\end{equation*}

According to Kronecker's theorem (see~\cite[p.~426]{Holtz_Tyaglov} and references there), rank of the
matrix $S=\|s_{j+k}\|_0^{\infty}$ is equal to the number of poles of the function $R$. It is clear that rank of $S$
equals~$n$ if the polynomials $p(z)$ and $p(-z)$ have no common
zeroes.

%
\begin{lemma}\label{lem.Determinants.relations.1}
For the function $R$ in~\eqref{Rat.func.2}, the
following formul\ae~hold:
\begin{equation}\label{Determinants.relations.1}
a_0^{2j}D_j(R)=(-1)^{\frac{j(j+1)}2}2^ja_0\Delta_{j-1}(p)\Delta_j(p),\quad
j=1,2,\ldots
\end{equation}
where $\Delta_j(p)$ are defined in~\eqref{Hurwitz.minors}, $\Delta_0(p)\equiv1$.
\end{lemma}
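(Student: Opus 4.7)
The right-hand side $\Delta_{j-1}(p)\Delta_j(p)$ couples an odd-index Hurwitz minor with an even-index one, and the formulas \eqref{Hurwitz.formula.1}--\eqref{Hurwitz.formula.2} already identify these two types of minors with $D_k(\Phi)$ and $\widehat D_k(\Phi)$ respectively. This suggests routing the proof through a bridge identity
\begin{equation*}
D_j(R) \;=\; (-1)^{j}\,2^{j}\,D_{\lceil j/2\rceil}(\Phi)\,\widehat D_{\lfloor j/2\rfloor}(\Phi), \qquad (\ast)
\end{equation*}
and then applying \eqref{Hurwitz.formula.1}--\eqref{Hurwitz.formula.2}.

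To derive $(\ast)$ I would start from the M\"obius relation \eqref{poly1.3} in the form $R(z) - 1 = -2g(z)/(1+g(z))$ with $g(z) := z\Phi(z^2)$. Since $\Phi(u) = \sum_{k \ge 0}\phi_k u^{-k-1}$, we get $g(z) = \sum_{k\ge 0}\phi_k z^{-(2k+1)}$, an odd function of $z^{-1}$ whose Laurent coefficients are exactly the $\phi_k$. Expanding $R-1 = -2g + 2g^{2} - 2g^{3} + \cdots$ produces a parity bipartition of the Laurent coefficients $s_k$ of $R-1$: odd powers of $g$ feed odd negative powers of $z$ and even powers feed even ones. I would exploit this by conjugating the Hankel matrix $(s_{i+k})_{0\le i,k\le j-1}$ by a unipotent lower-triangular Toeplitz matrix $L$ whose entries encode the formal inverse $(1+g)^{-1}=\sum_{m\ge 0}(-g)^{m}$; after a block permutation separating even- from odd-indexed rows and columns, the conjugated matrix becomes block-diagonal, with diagonal blocks whose determinants are (up to factors of $-2$) $D_{\lceil j/2\rceil}(\Phi)$ and $\widehat D_{\lfloor j/2\rfloor}(\Phi)$. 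Taking determinants and collecting signs gives~$(\ast)$, and the cases $j=1,2,3$ verified by direct expansion pin down all constants. Once $(\ast)$ is in hand, substituting $D_{\lceil j/2\rceil}(\Phi) = \Delta_{2\lceil j/2\rceil-1}(p)/a_0^{2\lceil j/2\rceil-1}$ and $\widehat D_{\lfloor j/2\rfloor}(\Phi) = (-1)^{\lfloor j/2\rfloor}\Delta_{2\lfloor j/2\rfloor}(p)/a_0^{2\lfloor j/2\rfloor}$, and using that $\{2\lceil j/2\rceil-1,\,2\lfloor j/2\rfloor\}=\{j-1,j\}$ for either parity of $j$, yields
$$a_0^{2j}\,D_j(R) = (-1)^{j + \lfloor j/2\rfloor}\,2^{j}\,a_0\,\Delta_{j-1}(p)\,\Delta_j(p);$$
an elementary case check ($j=2m$: $3m\equiv m\equiv j(j+1)/2$; $j=2m+1$: $3m+1\equiv m+1\equiv j(j+1)/2$) identifies this sign with $(-1)^{j(j+1)/2}$.

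The main obstacle is $(\ast)$. The parity bipartition makes the existence of such a factorization plausible, but pinning down the matrix $L$ explicitly and verifying that the diagonal blocks are exactly $D_{\lceil j/2\rceil}(\Phi)$ and $\widehat D_{\lfloor j/2\rfloor}(\Phi)$ --- rather than some larger or shifted Hankel determinants of $\Phi$ --- is a delicate linear-algebraic bookkeeping exercise, and tracking the sign $(-1)^j$ together with the factor $2^{j}$ adds to the tedium. As a fallback I would prove $(\ast)$ by induction on $j$ via the Desnanot--Jacobi identity: both sides satisfy the same three-term recurrence relating $D_j\cdot D_{j-2}$ to $D_{j-1}^{2}$ and to two neighbouring off-principal Hankel minors, and the base cases $j = 1, 2$ are immediate by direct inspection.
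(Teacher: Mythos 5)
Your proposal is correct, and it reaches \eqref{Determinants.relations.1} by a genuinely different route than the paper. The paper's proof is a direct computation: it applies the Hurwitz determinant formula to $R(z)$ itself, obtaining $a_0^{2j}D_j(R)$ as an explicit $2j\times 2j$ determinant in the coefficients of $p(z)$ and $(-1)^np(-z)$, and then reduces it by row subtractions (extracting the factor $2^j$), a row/column permutation (producing $(-1)^{j(j\pm1)/2}$), and a block-triangular splitting into $\Delta_{j-1}(p)\Delta_j(p)$. You instead factor the computation through $\Phi$: from $R-1=-2g/(1+g)$ with $g(z)=z\Phi(z^2)$ you get $D_j(R)=(-2)^jD_j(g)$, because multiplying a series by a unit power series in $1/z$ with constant term $1$ amounts to multiplying the semi-infinite Hankel matrix by a unipotent triangular Toeplitz matrix, which preserves leading principal minors; then, since $g$ has only odd negative powers of $z$, the even/odd permutation of rows and columns (an even permutation overall) block-diagonalizes its Hankel matrix into $D_{\lceil j/2\rceil}(\Phi)$ and $\widehat D_{\lfloor j/2\rfloor}(\Phi)$, and \eqref{Hurwitz.formula.1}--\eqref{Hurwitz.formula.2} finish the job. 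I checked your bridge identity $(\ast)$ and the final sign bookkeeping ($j+\lfloor j/2\rfloor\equiv j(j+1)/2\pmod 2$, and $\{2\lceil j/2\rceil-1,\,2\lfloor j/2\rfloor\}=\{j-1,j\}$), and both are right; the blocks really are the principal Hankel minors of $\Phi$ and of $u\Phi(u)$, not shifted ones, so the ``delicate bookkeeping'' you worry about is in fact routine. Two small remarks. First, ``conjugating'' is not quite the right word: the operation is one-sided multiplication $H(-2g)=L_{1+g}H(R-1)$ (a congruence $LHL^{T}$ would square the unit factor); the determinant conclusion is unaffected since $\det L=1$ on every leading block. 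Second, your argument as written covers $j\le n$ (the range where \eqref{Hurwitz.formula.1}--\eqref{Hurwitz.formula.2} apply); for $j>n$ both sides of \eqref{Determinants.relations.1} vanish by Kronecker's theorem, so nothing is lost. What your route buys is conceptual economy --- it reuses identities already established in Section~2 and explains structurally why the product $\Delta_{j-1}(p)\Delta_j(p)$ appears (one factor from each parity block); what the paper's route buys is self-containedness, since it needs no facts about $\Phi$ at all and yields the constant and the sign in one sweep.
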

\begin{proof}
By the Hurwitz formula (see \cite[p.\;214]{Gantmakher}, \cite[Theorem 1.5]{Holtz_Tyaglov} and references there) applied to the function~$R(z)$, we have
\begin{equation*}
a_0^{2j}D_j(R)=
\begin{vmatrix}
    a_0 &a_1 &a_2 &a_3 &\dots &a_{j-1} & a_{j}  &\dots &a_{2j-2} & a_{2j-1}\\
    a_0 &-a_1&a_2 &-a_3&\dots &-a_{j-1}& a_{j}  &\dots &a_{2j-2} &-a_{2j-1}\\
     0  &a_0 &a_1 &a_2 &\dots &a_{j-2} & a_{j-1}&\dots &a_{2j-3} &a_{2j-2}\\
     0  &a_0 &-a_1&a_2 &\dots &a_{j-2} &-a_{j-1}&\dots &-a_{2j-3}&a_{2j-2}\\
     0  & 0  &a_0 &a_1 &\dots &a_{j-3} & a_{j-2}&\dots &a_{2j-4} & a_{2j-3}\\
     0  & 0  &a_0 &-a_1&\dots &-a_{j-3}& a_{j-2}&\dots &a_{2j-4} &-a_{2j-3}\\
    \dots&\dots&\dots&\dots&\dots&\dots&\dots&\dots&\dots&\dots\\
     0  &  0 &  0 &  0 &\dots &a_{0} & a_{1}&\dots &a_{j-1} &a_{j}\\
     0  &  0 &  0 &  0 &\dots &a_{0} &-a_{1}&\dots &-a_{j-1}&a_{j}\\
\end{vmatrix}=
\end{equation*}
\begin{equation*}
=2^j\begin{vmatrix}
    a_0 &a_1 &a_2 &a_3 &\dots &a_{j-1} & a_{j}  &\dots &a_{2j-2} & a_{2j-1}\\
    a_0 &  0 &a_2 &  0 &\dots &   0    & a_{j}  &\dots &a_{2j-2} &    0    \\
     0  &a_0 &a_1 &a_2 &\dots &a_{j-2} & a_{j-1}&\dots &a_{2j-3} &a_{2j-2}\\
     0  &a_0 &  0 &  0 &\dots &a_{j-2} &    0   &\dots &    0    &a_{2j-2}\\
     0  & 0  &a_0 &a_1 &\dots &a_{j-3} & a_{j-2}&\dots &a_{2j-4} & a_{2j-3}\\
     0  & 0  &a_0 &  0 &\dots &    0   & a_{j-2}&\dots &a_{2j-4} &    0    \\
    \dots&\dots&\dots&\dots&\dots&\dots&\dots&\dots&\dots&\dots\\
     0  &  0 &  0 &  0 &\dots &a_{0} & a_{1}&\dots &a_{j-1} &a_{j}\\
     0  &  0 &  0 &  0 &\dots &a_{0} &   0  &\dots &    0   &a_{j}\\
\end{vmatrix}=
\end{equation*}
\begin{equation*}
=(-2)^j\begin{vmatrix}
    a_0 &  0 &a_2 &  0 &\dots &   0    & a_{j}  &\dots &a_{2j-2} &    0    \\
     0  &a_1 &  0 &a_3 &\dots &a_{j-1} &   0    &\dots &    0    & a_{2j-1}\\
     0  &a_0 &  0 &a_2 &\dots &a_{j-2} &   0    &\dots &    0    &a_{2j-2}\\
     0  & 0  &a_1 &  0 &\dots &   0    & a_{j-1}&\dots &a_{2j-3} &    0    \\
     0  & 0  &a_0 &  0 &\dots &   0    & a_{j-2}&\dots &a_{2j-4} &    0    \\
     0  & 0  &  0 &a_1 &\dots &a_{j-3} &   0    &\dots &    0    &a_{2j-3}\\
    \dots&\dots&\dots&\dots&\dots&\dots&\dots&\dots&\dots&\dots\\
     0  &  0 &  0 &  0 &\dots &a_{0} &   0  &\dots &    0   &a_{j}\\
     0  &  0 &  0 &  0 &\dots &   0  & a_{1}&\dots &a_{j-1} &  0  \\
\end{vmatrix}=
\end{equation*}
\begin{equation*}
=(-2)^ja_0\begin{vmatrix}
    a_1 &  0 &a_3 &  0  &\dots &a_{j-1} &   0    &\dots &    0    & a_{2j-1}\\
    a_0 &  0 &a_2 &  0  &\dots &a_{j-2} &   0    &\dots &    0    &a_{2j-2}\\
      0 &  0 &a_1 &  0  &\dots &a_{j-3} &   0    &\dots &    0    &a_{2j-3}\\
      0 &  0 &a_0 &  0  &\dots &a_{j-4} &   0    &\dots &    0    &a_{2j-4}\\
    \dots&\dots&\dots&\dots&\dots&\dots&\dots&\dots&\dots&\dots\\
      0 &  0 &  0 &  0  &\dots &a_{1}   &   0    &\dots &    0    &a_{j+1}\\
      0 &  0 &  0 &  0  &\dots &a_{0}   &   0    &\dots &    0    &a_{j}\\
      0 &a_1 &  0 &a_3  &\dots &   0    & a_{j-1}&\dots &a_{2j-3} &    0    \\
      0 &a_0 &  0 &a_2  &\dots &   0    & a_{j-2}&\dots &a_{2j-4} &    0    \\
    \dots&\dots&\dots&\dots&\dots&\dots&\dots&\dots&\dots&\dots\\
      0 &  0 &  0 &  0  &\dots &   0    & a_{2}  &\dots &a_{j}   &  0  \\
      0 &  0 &  0 &  0  &\dots &   0    & a_{1}  &\dots &a_{j-1} &  0  \\
\end{vmatrix}=
\end{equation*}
\begin{equation*}
=(-2)^ja_0(-1)^{\frac{j(j-1)}2}\begin{vmatrix}
    a_1 &a_3 &a_5 &\dots &a_{2j-1} &   0    &   0    &\dots &    0    &   0\\
    a_0 &a_2 &a_4 &\dots &a_{2j-2} &   0    &   0    &\dots &    0    &   0\\
      0 &a_1 &a_3 &\dots &a_{2j-3} &   0    &   0    &\dots &    0    &   0\\
      0 &a_0 &a_2 &\dots &a_{2j-4} &   0    &   0    &\dots &    0    &   0\\
    \dots&\dots&\dots&\dots&\dots&\dots&\dots&\dots&\dots&\dots\\
      0 &  0 &  0 &\dots &a_{j+1} &   0    &   0    &\dots &    0    &   0\\
      0 &  0 &  0 &\dots &a_{j}   &   0    &   0    &\dots &    0    &   0\\
      0 &  0 &  0 &\dots &   0    & a_1    & a_{3}  &\dots &a_{2j-5} &a_{2j-3}\\
      0 &  0 &  0 &\dots &   0    & a_0    & a_{2}  &\dots &a_{2j-6} &a_{2j-4}\\
    \dots&\dots&\dots&\dots&\dots&\dots&\dots&\dots&\dots&\dots\\
      0 &  0 &  0 &\dots &  0     &   0    &    0   &\dots &a_{j-2} &a_{j}   \\
      0 &  0 &  0 &\dots &  0     &   0    &    0   &\dots &a_{j-3} &a_{j-1}\\
\end{vmatrix}=
\end{equation*}
\begin{equation*}
=(-1)^{\frac{j(j+1)}2}2^ja_0\Delta_{j-1}(p)\Delta_j(p).
\end{equation*}
Here we set $a_j=0$ for $j>n$.
\end{proof}

\begin{remark}
Note that the formul\ae~\eqref{Determinants.relations.1}
can be obtained (overcoming certain difficulties) from some
theorems of the book~\cite{Wall}. But it is simpler to deduce
them directly as we have done above.
\end{remark}

Using this lemma it is easy to prove the equivalence of the
conditions $1)$ and $2)$ of
Theorem~\ref{Theorem.self-interlacing.Hurwitz.criterion}.

\begin{proof}[The second proof of Theorem~\ref{Theorem.self-interlacing.Hurwitz.criterion}]
According to Theorem~\ref{Theorem.SI.first.new}, $p\in\mathbf{SI}_I$ if and only if
the function $R(z)$ maps the upper half-plane of the complex plane to the lower half-plane, and has
exactly $n$ poles. Consequently (see e.g.~\cite[Theorem~3.4]{Holtz_Tyaglov} and references there),
the following inequalities hold
\begin{equation*}
D_j(R)>0,\qquad j=1,\ldots,n,
\end{equation*}
so from~\eqref{Determinants.relations.1} we obtain
\begin{equation}\label{Second.proof.relations}
(-1)^{\frac{j(j+1)}2}\Delta_{j-1}(p)\Delta_j(p)>0,\qquad
j=1,\ldots,n.
\end{equation}

Multiplying the inequalities~\eqref{Second.proof.relations} for
$j=2m$ and $j=2m-1$, we obtain
\begin{equation*}
\Delta_{2m-1}^2\Delta_{2m}\Delta_{2m-2}>0.
\end{equation*}
Consequently, the minors $\Delta_{2i}(p)$, $i=1,\ldots,r$, are
positive, since $\Delta_0(p)=1$, so the
inequalities~\eqref{Hurvitz.det.noneq.self-interlacing.1} hold.

If we multiply the inequalities~\eqref{Second.proof.relations} for
$j=2m$ and $j=2m+1$, we get
\begin{equation*}
-\Delta_{2m}^2(p)\Delta_{2m-1}(p)\Delta_{2m+1}(p)>0.
\end{equation*}
These inequalities
imply~\eqref{Hurvitz.det.noneq.self-interlacing.2}.

The converse assertion can be proved in the same way. That is, the
inequalities~\eqref{Hurvitz.det.noneq.self-interlacing.2}--\eqref{Hurvitz.det.noneq.self-interlacing.1}
imply the inequalities~\eqref{Second.proof.relations} which, in
turn, imply the inequalities $D_j(R)>0$, $j=1,\ldots,n$, according
to~\eqref{Determinants.relations.1}. According to
~\cite[Theorem~3.4]{Holtz_Tyaglov}, the function
$R(z)$ maps the upper half-plane of the complex plane to the lower half-plane, and has exactly $n$ poles, so $p\in\mathbf{SI}_I$
by Theorem~\ref{Theorem.SI.first.new}, as required.
\end{proof}

From Lemma~\ref{lem.Determinants.relations.1} and from Hurwitz's
stability criterion claiming the positivity of the Hurwitz minors~$\Delta_j(p)$, $j=1,\ldots,n$,
for any Hurwitz stable polynomial of degree $n$ and vice versa  (\cite{Hurwitz,Gantmakher}, see
also~\cite{Krein_Naimark} and references there) we get the following stability criterion.

\begin{theorem}\label{Theorem.new.stability.criterion}
For a real polynomial $p$ of degree $n$, the following statements are equivalent:
\begin{itemize}
\item[(1)] The polynomial $p$ is Hurwitz stable.
\item[(2)] The function $R$ defined in~\eqref{Rat.func.2}
maps the open right half-plane of the complex into the open unit disc, and has exactly $n$ poles.
\item[(3)] For the function $R$, the following
inequalities hold
\begin{equation*}
(-1)^{\frac{j(j+1)}2}D_j(R)>0,\quad j=1,2,\ldots,n.
\end{equation*}
\end{itemize}
\end{theorem}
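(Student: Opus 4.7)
The plan is to derive the theorem by separately establishing the equivalences $(1)\Leftrightarrow(3)$ and $(1)\Leftrightarrow(2)$, both of which reduce to short arguments once Lemma~\ref{lem.Determinants.relations.1} and elementary facts about $R(z)$ are in hand.

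For $(1)\Leftrightarrow(3)$, the key tool is Lemma~\ref{lem.Determinants.relations.1}, which yields
\begin{equation*}
a_0^{2j} D_j(R) = (-1)^{j(j+1)/2}\, 2^j a_0\, \Delta_{j-1}(p)\, \Delta_j(p), \qquad j=1,\ldots,n.
\end{equation*}
Since $a_0>0$, this identity says that $(-1)^{j(j+1)/2} D_j(R)$ has the same sign as the product $\Delta_{j-1}(p)\Delta_j(p)$. If $p$ is Hurwitz stable, the classical Hurwitz criterion guarantees $\Delta_j(p)>0$ for all $j=1,\ldots,n$ (with the convention $\Delta_0(p)\equiv 1$), so each of these products is positive, giving $(3)$. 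Conversely, if the inequalities in $(3)$ hold, then $\Delta_{j-1}(p)\Delta_j(p)>0$ for $j=1,\ldots,n$; starting from $\Delta_0(p)=1>0$ and inducting on $j$, I obtain $\Delta_j(p)>0$ for all $j$, so the Hurwitz criterion delivers $(1)$.

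For $(1)\Leftrightarrow(2)$, I factor $p(z)=a_0\prod_{k=1}^n(z-z_k)$ so that $R(z)=\prod_k\frac{z+z_k}{z-z_k}$. If $p$ is Hurwitz stable, pairing complex-conjugate roots (and handling real negative roots individually) yields a factor-by-factor modulus estimate showing $|R(z)|<1$ for $\Re z>0$; stability also rules out roots on the imaginary axis or symmetric pairs $\pm z_0$, so $p(z)$ and $p(-z)$ share no zero and $R$ has exactly $n$ poles. Conversely, if $R$ maps the open right half-plane into the open unit disc, then $R$ is analytic on $\{\Re z>0\}$, so no root of $p$ lies there; and the ``exactly $n$ poles'' hypothesis prevents cancellation between numerator and denominator of $R$, which in turn forbids both purely imaginary roots of $p$ and symmetric pairs $\pm z_0$. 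Hence all roots of $p$ lie in the open left half-plane.

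The main obstacle is the careful bookkeeping in $(2)\Rightarrow(1)$: ensuring that ``exactly $n$ poles'' genuinely excludes cancellations caused by imaginary or mirror-symmetric roots of $p$. Once this point is pinned down, the case analysis is short. As an alternative route, one may obtain $(2)\Leftrightarrow(3)$ directly from the theory of rational $R$-functions (via an analogue of Theorem~3.4 of~\cite{Holtz_Tyaglov}, already invoked in the proof of Theorem~\ref{Theorem.SI.first.new}), which characterizes the image of the right half-plane under a rational function in terms of sign patterns of the Hankel minors of its Laurent expansion at infinity; this bypasses the direct half-plane verification altogether.
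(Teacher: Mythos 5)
Your proposal is correct and follows essentially the same route as the paper: the equivalence $(1)\Leftrightarrow(3)$ via Lemma~\ref{lem.Determinants.relations.1} combined with the classical Hurwitz criterion (with the same telescoping/induction on the sign of $\Delta_{j-1}(p)\Delta_j(p)$), and the equivalence $(1)\Leftrightarrow(2)$ via the Blaschke-type factorization $R(z)=\prod_k\frac{z+z_k}{z-z_k}$ with the factorwise bound $\bigl|\frac{z+a}{z-\overline{a}}\bigr|<1$ for $\Re z>0$, $\Re a<0$, and the observation that ``exactly $n$ poles'' excludes common zeros of $p(z)$ and $p(-z)$. No gaps; your treatment of the cancellation issue in $(2)\Rightarrow(1)$ matches the paper's.
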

\begin{proof}
The equivalence $(1)$ and $(3)$ follows from the Hurwitz stability criterion and Lemma~\ref{lem.Determinants.relations.1}
as we mentioned above.

Let $p(z)$ be Hurwitz stable. Then $p(z)$ and $p(-z)$ have no common roots,
so $R(z)$ has exactly $n$ poles. The polynomial $p(z)$ can be represented as follows
$$
p(z)=a_0\prod\limits_{k}(z-\lambda_k)\prod\limits_j(z-\mu_j)(z-\overline{\mu}_j),
$$
where $\lambda_k<0$, $\Re\mu_j<0$.

Consequently, the function $R(z)$ has the form
$$
R(z)=\prod\limits_k\dfrac{z+\lambda_k}{z-\lambda_k}\cdot
\prod\limits_j\dfrac{z+\mu_j}{z-\overline{\mu}_j}\cdot\dfrac{z+\overline{\mu}_j}{z-\mu_j}.
$$
It is clear now that for any $z$ such that $\Re z>0$ we have $|R(z)|<1$, since
$$
\left|\dfrac{z+a}{z-\overline{a}}\right|<1,
$$
for any $z$ and $a$ such that $\Re z>0$ and $\Re a<0$, and $R(z)$ is a product of such functions. Additionally, it is easy to see that $R(z)$ maps the
imaginary axis into the unit circle and the open left half-plane into the exterior of the closed unit disc.

Conversely, if $R(z)$ maps the open right half-plane into the unit circle, and has exactly $n$ poles, then it has no poles in the open right half-plane.
It also has no poles on the imaginary axis, since any pure imaginary zero of $p(z)$ is a zero of $p(-z)$, but
they have no common zeroes by assumption. So all poles of $R(z)$ (the zeroes of $p(z)$) lie in the open left half-plane,
as required.
\end{proof}
%


\section*{Acknowledgement}
The author is Shanghai Oriental Scholar whose work was supported by Russian Science Foundation, grant no. 14-11-00022.

\setcounter{equation}{0}

\section{Appendix}

Let $p$ be a real polynomial as in~\eqref{main.polynomial.2}
with $a_n\neq0$, and
let $q$ be defined as in~\eqref{main.polynomial.2.stable}. In the same way as used in the proof of Theorem~\ref{Theorem.SI_connection.second}, it is easy
to establish that if $\lambda\in\mathbb{R}$, then
\begin{equation*}
p(\lambda)=0\Longleftrightarrow\arg
q(i\lambda)=(-1)^{n-1}\dfrac\pi4\,\,\,\text{or}\,\,\,(-1)^{n-1}\dfrac{5\pi}4.
\end{equation*}

This fact is related to the following formula
\begin{equation}\label{tan.arctan.formula}
\tan\sum_{k=1}^{n}\arctan(a_k)=\dfrac{\sum\limits_{k=1}^{n}a_k-
\sum\limits_{i=1}^{l-1}e_{2i+1}(a_1,\ldots,a_n)}
{1-\sum\limits_{i=1}^{r}e_{2i}(a_1,\ldots,a_n)}
\end{equation}
where $l$ and $r$ are defined in~\eqref{floor.poly.degree} and in~\eqref{floor.poly.degree.2}, respectively, and
$$
e_k(x_1,\ldots,x_n)=\sum\limits_{1\leqslant i_1<\cdots<i_{k}\leqslant n}x_{i_1}x_{i_2}\cdots x_{i_{k}},
$$
is the $k$th symmetric function.

Formula~\eqref{tan.arctan.formula} can be proved by induction from the well-known formula (see e.g.~\cite{Gradshtein.Ryzhik}):
$$
\tan(\arctan(x)+\arctan(y))=\dfrac{x+y}{1-xy}.
$$

If now we represent the polynomial $p$ as follows
$$
p(z)=a_0\prod\limits_{k=1}^n(z-\lambda_k),
$$
then for $\lambda\in\mathbb{R}$ such that $\arg p(i\lambda)=(-1)^{n-1}\dfrac{\pi}4$ or $(-1)^{n-1}\dfrac{5\pi}4$, we have, by~\eqref{tan.arctan.formula},
$$
\tan\left(\arg p(i\lambda)\right)=\tan\sum_{k=1}^{n}\arctan\left(\dfrac{\Im\lambda_k-\lambda}{\Re\lambda_k}\right)=\dfrac{\lambda p_1(-\lambda^2)}{p_0(-\lambda^2)}=(-1)^{n-1},
$$
Now from the formula $q(z)=(-1)^{l}[p_0(-z^2)+(-1)^nzp_1(-z^2)]$, it follows that $q(\lambda)=0$.

Also, formula~\eqref{tan.arctan.formula} can be used to find the roots of the dual polynomial $q$ if the roots of $p$ are known.
\begin{example}
Consider the polynomial
$$
p(z)=(z+a)^n=\sum\limits_{k=0}^n\binom{n}{k}z^ka^{n-k}
$$
which is stable for $a>0$. By Theorem~\ref{Theorem.SI_connection.second}, its dual polynomial
$$
q(z)=\sum\limits_{k=0}^n(-1)^{\tfrac{k(k+1)}{2}}\binom{n}{k}z^ka^{n-k}
$$
has $n$ distinct \textit{real} roots $\mu_k$, $k=1,\ldots,n$, satisfying the condition
$$
\arg p(i\mu_k)=n\arctan\left(\dfrac{\mu_k}a\right)=(-1)^{n-1}\left(\dfrac{\pi}{4}+\pi k\right).
$$
Therefore,
$$
\mu_k=(-1)^{n-1}a\cdot\tan\dfrac{\pi(4k+1)}{4n},\qquad k=1,\ldots,n.
$$
Note that if $a$ is an arbitrary non-zero complex number, the roots of the polynomial $q$
have the same form.

In a similar way, it is possible to find the roots of the self-interlacing polynomial $q(z)$ dual to a stable polynomial with only two
distinct roots of equal multiplicity. We leave such an exercise to the reader.

\end{example}

\addcontentsline{toc}{section}{Bibliography} 

\end{document}